\newtheorem{thm}{Theorem}[section]
\newtheorem{remark}{Remark}[section]
\newtheorem{lemma}{Lemma}[section]
\newtheorem{prop}{Proposition}[section]
\newtheorem{exam}{Example}[section]
\newtheorem{defn}{Definition}[section]
\newtheorem{cor}{Corollary}[section]
\newtheorem{conjecture}{Conjecture}[section]
\begin{document}

\title[Complete Positivity]{The DJL Conjecture for CP Matrices Over Special Inclines}
\author[Preeti Mohindru]{Preeti Mohindru}
\author[Rajesh Pereira]{Rajesh Pereira}
\address{Department of Mathematics \& Statistics, University of Guelph, Guelph, ON, Canada N1G 2W1}
\email{preeti.mohindru@gmail.com}
\email{pereirar@uoguelph.ca}
\subjclass[2010]{15A80, 15B48, 16Y60}
\keywords{matrices, inclines, completely positive matrices, totally ordered inclines, diagonally dominant matrices}
\thanks{The authors would like to thank the referees for their suggestions.  P.~Mohindru would like to acknowledge the Government of Ontario for support in the form of an Ontario Government Scholarship. R.~Pereira would like to acknowledge the NSERC for support in the form of Discovery grant 400550.}

\begin{abstract}
Drew, Johnson and Loewy conjectured that for  $n \geq 4$, the CP-rank of every $n \times n$ completely positive real matrix is at most $\left[n^{2}/4\right]$. While this conjecture has recently been disproved for completely positive real matrices, we show that this conjecture is true for $n \times n$ completely positive matrices over certain special types of inclines. In addition, we prove an incline version of Markham's theorems which gives sufficient conditions for completely positive matrices over special inclines to have triangular factorizations.

\end{abstract}
\maketitle

\section{Introduction}

In this paper, we give a characterization of completely positive matrices over special inclines and we find the upper bound on the CP-rank of these matrices.  This upper bound verifies the analog of the Drew-Johnson-Loewy conjecture for matrices over these inclines.  In the next subsection, we review the theory of completely positive real matrices.  In the second subsection,  we look at some of the theory relating the nonnegativity of almost principal minors and triangular decomposition for completely positive real matrices. In the third subsection, we study the theory of semirings.  In our final introductory subsection, we review the theory of inclines.  In section two,  we find necessary and sufficient conditions for symmetric matrices over special inclines to be completely positive.  In section three,  we prove the truth of the Drew-Johnson-Loewy conjecture for completely positive matrices over certain special types of inclines. In section four, we prove analogs of the results relating the nonnegativity of almost principal minors and the triangular decomposition for completely positive matrices over special inclines.

\subsection{Completely  Positive Matrices}

 An $n \times n$ real matrix $A$ is called completely positive (CP) if, for some $m\in \mathbb{N}$, there exists an $n \times m$ nonnegative matrix $B$ such that $A = BB^{T}$. Such a decomposition is called a \textit{completely positive decomposition} and is not necessarily unique. The set of completely positive $n \times  n$ matrices is denoted by $CP_{n}$.  For more details on real completely positive matrices see \cite{BSM03}.

It can be easily seen that $A$ is a real completely positive matrix if and only if $A$ can be written as
\begin{center}
$A =  b_{1}b_{1}^{T} +  b_{2}b_{2}^{T} +...+ b_{m}b_{m}^{T}$
\end{center}
where $b_{i} \in R^{n}$ is nonnegative, for all $i = 1,...,m$. Here $b_{i}$ corresponds to the $i^{th}$  column of $B$ for all $i$ and all $B_{i}^{'}s = b_{i}b_{i}^{T}$  $(i= 1,2,...,m)$ are rank one CP matrices. We refer to this as a \textit{rank 1 CP-representation of $A$}. For $0 \leq b_{j} \in R^{n}$, let supp$(b_{j}) = \{ i : b_{ij} \neq 0\}$ denote the support of $b_{j}$, and let $s(b_{j})$ denote the cardinality of supp$(b_{j})$. A representation
\begin{center}
$A = \sum_{j =1}^{k} b_{j}b_{j}^{T}, \quad \quad b_{j} \geq 0, \quad \quad s(b_{j}) \leq t$
\end{center}
will be called \textit{support t rank 1 CP-representation.}
\begin{defn} Let A be an $n \times n$ real completely positive   matrix. The minimal $m$ such that $A = BB^{T}$ for some nonnegative $n \times m$  real matrix $B$,  is called the \textit{CP-rank} of $A$. The CP-rank of A is denoted by CP-rank$(A)$.
 \end{defn}

Completely positive matrices have applications to such areas as the theory of inequalities,  the theory of block designs in combinatorics,  probability and statistics, optimization theory and economic modeling.

It would be of great interest to have an efficient algorithm to decide if a given matrix is completely positive or an efficient algorithm for computing the CP-rank of a given completely positive matrix.  While there is no efficient way of solving either problem for real matrices, we will see   that for special inclines there is an easy test for complete positivity.

Until recently, the most famous open problem in the theory of completely positive matrices is the following conjecture stated by Drew, Johnson and Loewy.
\begin{conjecture}  \cite{DJL} If $A$ is a real CP-matrix of order $n \geq 4$ then CP-rank($A$) $\leq \left[n^{2}/4\right]$. \end{conjecture}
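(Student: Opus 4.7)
The plan is to attempt an inductive proof on $n$, with the base case $n=4$ handled by the known bound $\lfloor n^{2}/4\rfloor = 4$, and the inductive step obtained by extracting a rank-one CP term $bb^{T}$ from $A$ so that $A - bb^{T}$ is still completely positive and closer to a lower-dimensional CP matrix. The governing heuristic is that any CP matrix should admit a rank-one CP-representation whose summands have tightly controlled supports; once such control is available, a pigeonhole argument over the $\binom{n}{k}$ possible support patterns should give the quadratic bound $\lfloor n^{2}/4\rfloor$.

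First I would try to establish a support-control lemma: every $n \times n$ CP matrix $A$ admits a support-$t$ rank-one CP-representation for some $t$ bounded by a combinatorial invariant of $A$ (for instance, the clique-cover number of its zero-nonzero graph). If one can force $t \le \lceil n/2\rceil$ in general, then counting the admissible support patterns and noting that the cross terms inside a fixed support of size $t$ contribute at most $\binom{t}{2}+t$ essentially independent rank-one pieces should collapse the bound to $\lfloor n^{2}/4\rfloor$. Next I would reduce to well-structured subclasses where this is already verifiable — diagonally dominant CP matrices, CP matrices whose graph is a tree or is triangle-free, and matrices admitting a Markham-type triangular factorization — and then try to decompose a general CP matrix as a sum of such pieces while controlling the total number of rank-one summands.

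The main obstacle, which the abstract already flags as fatal in the real case, is that no uniform support bound of the required strength holds for an arbitrary real CP matrix. The rank-one peeling step has no built-in mechanism to guarantee that the residual $A - bb^{T}$ remains in the CP cone while simultaneously reducing the CP-rank by exactly one, and the combinatorial tricks that succeed on the structured subclasses above do not glue together in the general case. This is consistent with the abstract's remark that explicit CP matrices violating the DJL bound have been exhibited in the real setting, so any honest plan of this kind must break down at the support-control step. It is precisely this failure that motivates the paper's strategic shift to inclines, where the idempotent addition forces rank-one CP-representations to be genuinely combinatorial and small-support, and the same inductive plan should go through cleanly.
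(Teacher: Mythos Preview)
The statement you are trying to prove is not a theorem in the paper; it is the original Drew--Johnson--Loewy conjecture for \emph{real} CP matrices, and the paper does not prove it. On the contrary, immediately after stating the conjecture the paper records that Bomze, Schachinger and Ullrich have \emph{disproved} it by constructing explicit counterexamples in every dimension $n\ge 7$. So there is no ``paper's own proof'' to compare against, and any purported proof of the conjecture as stated must be wrong.

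You in fact recognize this yourself: your final paragraph concedes that the support-control lemma you need cannot hold for general real CP matrices, and that the rank-one peeling step has no mechanism to keep the residual in the CP cone. That diagnosis is accurate, but it means your document is not a proof proposal at all --- it is a correct explanation of why the naive strategy fails, followed by a (also correct) remark that the incline setting repairs the obstruction. The paper's actual positive result is Theorem~\ref{inclines_CP_DJL_result}, which proves the $[n^2/4]$ bound for CP matrices over \emph{totally ordered normal inclines}, and its proof does proceed essentially along the inductive, support-$3$ rank-one peeling line you sketch. If your intent was to prove that incline theorem, you should restate the target accordingly; as written, your proposal targets a false statement.
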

Here $\left[x\right]$ is the greatest integer function.

 This conjecture  has been listed as a problem by Xingzhi Zhan in  "Open Problems in Matrix Theory" \cite{Zh08}.  It has been proven for certain special cases. If $A$ is an $n \times n$ symmetric matrix, the graph of $A$, denoted by $G(A)$, is a graph on vertices $1, 2, . . . , n$ with
$\{i, j\}$ an edge if and only if $i \neq j$ and $a_{ij} \neq 0$.
\begin{defn}   Let $G$ be a graph on $n$ vertices  and  $A$ be an $n \times n$ real completely positive matrix. The matrix $A$ is called a \textit{CP matrix realization} of $G$ if  $G(A) = G$.  \end{defn}
\begin{defn} \cite{NSM} Let $G$ be a graph on $n$ vertices. The CP-rank of $G$, denoted by
CP-rank$(G)$, is the maximal CP-rank of a CP matrix realization of $G$, that is,\begin{center}
CP-rank$(G)$ = max$\{$CP-rank$(A) | A$ is CP and $G(A) = G\}$.\end{center}
\end{defn}
The Drew-Johnson-Loewy conjecture can be rephrased  \cite{NSM} as: for every graph $G$ on $n \geq 4$ vertices, CP-rank($G$) $\leq \left[n^{2}/4\right]$. This conjecture has been proven for triangle free graphs in \cite{DJL}, for graphs which contain no odd cycle of length 5 or more in \cite{DrJ}, for all graphs on 5 vertices which are not the complete graph in \cite{LoT}, for nonnegative matrices with a positive semidefinite comparison matrix (and any graph) in \cite{BSM}  and for all $5 \times 5$ completely positive matrices  in \cite{SBJS}.   However,  Bomze, Schachinger and Ullrich \cite{BSU14, BSU15} have disproved the  Drew-Johnson-Loewy conjecture for real completely positive matrices by constructing counterexamples in every dimension greater than or equal to seven.

\subsection{Almost Principal Minors}

A minor of a matrix $A$ is the determinant of a square submatrix of $A$.  We remind the reader of the standard notation for submatrices.
 Let $\alpha=\{\alpha_1<\alpha_2<...<\alpha_k\}$ and $\beta=\{\beta_1<\beta_2<...<\beta_k\}$ be two subsets of $\{ 1,2,...,n\} $ of cardinality $k$.  Then $A[\alpha |\beta]$ is the $k$ by $k$ submatrix of $A$ whose $(i,j)$th entry is $a_{\alpha_i\beta_j}$ and $det(A[\alpha |\beta])$ is a minor of $A$.  The set of all minors of $A$ is $\{ det(A[\alpha |\beta]): \alpha ,\beta \subseteq \{ 1,2,...,n\}, |\alpha| =|\beta|\}$. A minor $det(A[\alpha |\beta])$ is called a principal minor if $\alpha=\beta$.  Positive semidefinite matrices, a class which includes all completely positive matrices, have nonnegative principal minors. There are two other classes of minors which play a key role in the theory of complete positivity.

 \begin{defn} Let $A \in M_{n}(\mathbb{R})$. Let $\alpha=\{\alpha_1<\alpha_2<...<\alpha_k\}$ and $\beta=\{\beta_1<\beta_2<...<\beta_k\}$ be two subsets of $\{ 1,2,...,n\} $ of the same cardinality. Then the $k \times k$ submatrix, $A[\alpha| \beta]$ is called a \textit{left almost principal submatrix} of $A$ if $\alpha_j=\beta_j$ for all $j:2\le j\le k$ but $\alpha_1\neq \beta_1$.  The determinant of a left almost principal submatrix is called a \textit{left almost principal minor}.
\end{defn}

\begin{defn}  Let $A \in M_{n}(\mathbb{R})$. Let $\alpha=\{\alpha_1<\alpha_2<...<\alpha_k\}$ and $\beta=\{\beta_1<\beta_2<...<\beta_k\}$ be two subsets of $\{ 1,2,...,n\} $ of the same cardinality. Then the $k \times k$ submatrix, $A[\alpha| \beta]$ is called a \textit{right almost principal submatrix} of $A$ if $\alpha_j=\beta_j$ for all $j:1\le j\le k-1$ but $\alpha_k\neq \beta_k$.  The determinant of a right almost principal submatrix is called a \textit{right almost principal minor}.\end{defn}

The almost principal minors play a key role in the theory of nonnegative LU and UL decompositions.

\begin{defn} We say that an $n$ by $n$ real matrix $A$ is $UL$-completely positive if there exists an $n$ by $n$ upper triangular nonnegative matrix $B$ such that $A=BB^T$.  We say that $A$ is $LU$-completely positive if there exists an $n$ by $n$ lower triangular nonnegative matrix $C$ such that $A=CC^T$. \end{defn}

The following results are due to Markham \cite{TLM} and can also be found in the reference \cite{BSM03}.

\begin{thm} \cite{TLM} \label{Mar1} Let $A$ be a doubly nonnegative matrix.  If all of the left almost principal minors of $A$ are nonnegative then $A$ is $UL$-completely positive.
\end{thm}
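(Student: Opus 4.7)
The plan is to induct on $n$, performing an upper-triangular Cholesky-style factorization of $A$ starting from the bottom-right corner and using the hypothesis on left almost principal minors to guarantee nonnegativity of the factor at every stage. The base case $n=1$ is immediate: take $B=[\sqrt{a_{11}}]$.

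For the inductive step, first dispose of the degenerate case $a_{nn}=0$. Since $A$ is positive semidefinite, a vanishing diagonal entry forces the entire $n$-th row and column to vanish, so one applies the induction hypothesis to $A[\{1,\ldots,n-1\}]$ (which is still doubly nonnegative, and whose left almost principal minors are a subset of those of $A$) and pads the resulting upper-triangular factor with a zero last column and row. When $a_{nn}>0$, define the last column of the prospective factor $B$ by $b_{nn}=\sqrt{a_{nn}}$ and $b_{in}=a_{in}/\sqrt{a_{nn}}$ for $i<n$, which are nonnegative because $A$ is entrywise nonnegative. Set $v=(a_{1n},\ldots,a_{n-1,n})^T$ and form the Schur complement $\tilde A = A[\{1,\ldots,n-1\}] - a_{nn}^{-1}vv^T$.

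The crux of the proof is verifying that $\tilde A$ inherits the hypotheses, after which the inductive hypothesis produces an upper-triangular nonnegative $\tilde B$ with $\tilde A=\tilde B\tilde B^T$, and $B=\left(\begin{smallmatrix}\tilde B & a_{nn}^{-1/2}v\\ 0 & \sqrt{a_{nn}}\end{smallmatrix}\right)$ completes the factorization $A=BB^T$. Three properties of $\tilde A$ must be checked: (i) positive semidefiniteness, which is the standard Schur-complement property of positive semidefinite matrices; (ii) entrywise nonnegativity, which follows because for $i<j\le n-1$ the $(i,j)$-entry of $\tilde A$ equals $a_{nn}^{-1}\det(A[\{i,n\}\mid\{j,n\}])$, and the matrix $A[\{i,n\}\mid\{j,n\}]$ is a $2\times 2$ left almost principal submatrix of $A$ (while the diagonal entries come from $2\times 2$ principal minors, nonnegative by positive semidefiniteness); (iii) nonnegativity of the left almost principal minors of $\tilde A$, which follows from the Schur--Sylvester identity
$$\det(\tilde A[\alpha\mid\beta]) \;=\; a_{nn}^{-1}\,\det\bigl(A[\alpha\cup\{n\}\mid\beta\cup\{n\}]\bigr).$$

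The main obstacle is the bookkeeping in step (iii): one must confirm that appending $n$ to both index sets preserves the \emph{left} almost principal structure. This is clean because $n$ is larger than every element of $\alpha$ and $\beta$, so it becomes the common last entry $\alpha'_{k+1}=\beta'_{k+1}=n$ of the augmented tuples, while the existing agreement $\alpha_j=\beta_j$ for $j\ge 2$ and disagreement $\alpha_1\neq\beta_1$ carry over verbatim. Thus each left almost principal minor of $\tilde A$ equals $a_{nn}^{-1}$ times a left almost principal minor of $A$, and the hypothesis on $A$ transfers to $\tilde A$, closing the induction.
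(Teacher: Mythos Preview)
The paper does not actually prove this theorem: it is stated as a background result attributed to Markham \cite{TLM} (see also \cite{BSM03}), so there is no in-paper proof to compare against. Your argument is the standard one and is correct: reverse Cholesky elimination via the Schur complement with respect to the $(n,n)$ entry, combined with the Sylvester identity $\det(A[\alpha\cup\{n\}\mid\beta\cup\{n\}])=a_{nn}\det(\tilde A[\alpha\mid\beta])$ to propagate both entrywise nonnegativity and the left-almost-principal-minor hypothesis to $\tilde A$. The bookkeeping you flag in step~(iii) is indeed the only point requiring care, and your observation that appending $n$ as the common \emph{last} index preserves the left-almost-principal structure is exactly what makes the induction close (and explains why pivoting on the bottom-right corner, rather than the top-left, is the natural choice for the \emph{left} almost principal hypothesis).

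It may be worth noting, for context, that the paper's own proof of the incline analog (Theorem~\ref{maxthm1}) proceeds quite differently: there one simply sets $u_{ij}=a_{ij}$ for $i<j$ and $u_{ii}=\mathbf{1}$ and checks $UU^T=A$ directly, with no Schur complement or induction. That shortcut works only because of the absorption law $x\oplus(x\otimes y)=x$ in an incline, and has no counterpart over the reals, where your subtractive Schur-complement approach is unavoidable.
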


\begin{thm} \cite{TLM} \label{Mar2} Let $A$ be a doubly nonnegative matrix.  If all of the right almost principal minors of $A$ are nonnegative then $A$ is $LU$-completely positive.
\end{thm}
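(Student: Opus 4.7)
The plan is to deduce Theorem~\ref{Mar2} from Theorem~\ref{Mar1} by conjugating $A$ with the reversal permutation matrix $J$, the $n\times n$ matrix with $1$'s on the antidiagonal and $0$'s elsewhere. Since $J^T=J=J^{-1}$ is a permutation matrix, $JAJ$ is positive semidefinite and entrywise nonnegative whenever $A$ is; hence $JAJ$ is again doubly nonnegative.

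The key step is a bookkeeping lemma: the map $(\alpha,\beta)\mapsto (\alpha^{*},\beta^{*})$, where $\alpha^{*}=\{n+1-a:a\in\alpha\}$ listed in increasing order (and similarly for $\beta$), is a bijection between left almost principal index-pairs and right almost principal index-pairs, and it preserves the corresponding minors. A direct computation shows
\[
JAJ[\alpha|\beta]_{ij}=A_{n+1-\alpha_i,\,n+1-\beta_j},
\]
so $JAJ[\alpha|\beta]$ differs from $A[\alpha^{*}|\beta^{*}]$ only by a simultaneous reversal of the row and column orderings of a $k\times k$ matrix. The two sign contributions $(-1)^{\lfloor k/2\rfloor}$ cancel, giving $\det(JAJ[\alpha|\beta])=\det(A[\alpha^{*}|\beta^{*}])$. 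Moreover, the condition $\alpha_j=\beta_j$ for $j\ge 2$ with $\alpha_1\neq \beta_1$ translates precisely to $\alpha^{*}_j=\beta^{*}_j$ for $j\le k-1$ with $\alpha^{*}_k\neq \beta^{*}_k$. Hence the hypothesis that every right almost principal minor of $A$ is nonnegative is equivalent to every left almost principal minor of $JAJ$ being nonnegative.

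Theorem~\ref{Mar1} then applies to $JAJ$ and yields an upper triangular nonnegative matrix $B$ with $JAJ=BB^T$. Setting $C:=JBJ$, conjugation by $J$ sends upper triangular matrices to lower triangular ones and preserves entrywise nonnegativity, so $C$ is lower triangular and nonnegative. Finally,
\[
A=J(JAJ)J=J\,BB^T\,J=(JBJ)(JBJ)^T=CC^T,
\]
showing that $A$ is $LU$-completely positive. The only step requiring genuine care is the index bookkeeping that identifies left almost principal minors of $JAJ$ with right almost principal minors of $A$; this is a routine sign-tracking argument, and I do not foresee any serious obstacle beyond this verification.
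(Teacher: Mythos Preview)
Your reduction is correct: conjugation by the reversal permutation $J$ sends doubly nonnegative matrices to doubly nonnegative matrices, interchanges left and right almost principal minors (with the determinant preserved because the row and column reversals contribute the same sign), and sends upper triangular matrices to lower triangular ones, so Theorem~\ref{Mar2} follows cleanly from Theorem~\ref{Mar1}.

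There is nothing to compare against, however: the paper does not prove Theorem~\ref{Mar2}. It is quoted from Markham~\cite{TLM} (and referenced again via~\cite{BSM03}) as a known result in the real case, and serves only as motivation for the incline analogs in Section~\ref{LU_UL_factorization_inclines}. The paper's own contribution is Theorem~\ref{maxthm2}, the incline version, and even that is stated with the remark ``By a similar argument to the previous theorem'' rather than written out. Your symmetry argument via $J$ would equally well deduce Theorem~\ref{maxthm2} from Theorem~\ref{maxthm1} in the incline setting, since $J$ is a $\{\mathbf{0},\mathbf{1}\}$-matrix and the semiring operations behave identically under this permutation conjugation.
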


It has been shown in \cite{BSM03}, that for $n \leq 3$, every $n \times n$  completely positive real matrix is either LU-completely positive or UL-completely positive or both LU-completely positive and UL-completely positive. However for $n \geq 4$, an $n \times n$ completely positive real matrix may be neither  UL-completely positive nor  LU-completely positive.  An example is the matrix below given in \cite[Example 2.17]{BSM03}.
For example, the matrix \begin{center}
$A = \begin{bmatrix}
2&0&0&1&1\\0&2&0&1&1\\0&0&2&1&1\\1&1&1&2&0\\1&1&1&0&2
\end{bmatrix}$
\end{center}
is a completely positive real matrix, but it is neither UL-completely positive nor  LU-completely positive.

Note that if a real matrix $A$ is LU-completely positive, then it does not necessarily follow that $A$ is UL-completely positive. For example, consider the matrix from \cite[Example 2.19]{BSM03},
\begin{center}
$A = \begin{bmatrix}
2&1&3\\1&3&2\\3&2&5
\end{bmatrix}$
\end{center}
A is LU-completely positive  and the LU-completely positive factorization of $A$ is
\begin{center}
$A= \begin{bmatrix}
\sqrt{2}&0&0\\\frac{1}{\sqrt{2}}&\frac{\sqrt{5}}{\sqrt{2}}&0\\\frac{3}{\sqrt{2}}&\frac{1}{\sqrt{10}}&\frac{2}{\sqrt{10}}
\end{bmatrix}\begin{bmatrix}
\sqrt{2}&0&0\\\frac{1}{\sqrt{2}}&\frac{\sqrt{5}}{\sqrt{2}}&0\\\frac{3}{\sqrt{2}}&\frac{1}{\sqrt{10}}&\frac{2}{\sqrt{10}}
\end{bmatrix} ^{T}$
\end{center}
 but it can be verified that $A$ is not UL-completely positive.

The following example shows that a UL-completely positive real matrix may not be LU-completely positive. Let
\begin{center}
$A = \begin{bmatrix}
2&1&3\\1&3&1\\3&1&5
\end{bmatrix}$
\end{center}
It is UL-completely positive and its UL-completely positive factorization of $A$ is
\begin{center}
$A= \begin{bmatrix}
\sqrt{\frac{5}{35}}&\sqrt{\frac{2}{35}}&\frac{3}{\sqrt{5}}\\0&\sqrt{\frac{14}{5}}&\frac{1}{\sqrt{5}}\\0&0&\sqrt{5}
\end{bmatrix}\begin{bmatrix}
\sqrt{\frac{5}{35}}&\sqrt{\frac{2}{35}}&\frac{3}{\sqrt{5}}\\0&\sqrt{\frac{14}{5}}&\frac{1}{\sqrt{5}}\\0&0&\sqrt{5}
\end{bmatrix} ^{T}$
\end{center} but it can be verified that $A$ is not LU-completely positive.

A real matrix is called \textit{totally nonnegative} if all of its minors are nonnegative.  The class of nonnegative matrices is an area of great interest.  We note that the following is an immediate consequence of Markham's theorems.

\begin{cor} \label{marcor} Any square symmetric totally nonnegative matrix is both LU and UL-completely positive.\end{cor}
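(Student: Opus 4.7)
The plan is to verify the two hypotheses of Markham's theorems (Theorems \ref{Mar1} and \ref{Mar2}) directly from total nonnegativity, and then simply invoke them. The corollary follows almost immediately once the setup is in place, so the only real work is checking the ``doubly nonnegative'' condition.

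First I would recall that a doubly nonnegative matrix means one that is both entrywise nonnegative and positive semidefinite. Entrywise nonnegativity is immediate: every $1 \times 1$ minor of $A$ is nonnegative by the total nonnegativity hypothesis, so $a_{ij} \geq 0$ for all $i,j$. For positive semidefiniteness, I would use the standard fact that a real symmetric matrix is positive semidefinite if and only if all of its principal minors are nonnegative. Since $A$ is totally nonnegative, in particular every principal minor $\det(A[\alpha|\alpha])$ is nonnegative, and by symmetry $A$ is PSD. Thus $A$ is doubly nonnegative.

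Next, I would observe that the left almost principal minors and the right almost principal minors are, by definition, determinants of square submatrices of $A$ (of the form $\det A[\alpha|\beta]$ with $|\alpha|=|\beta|$), hence they are all minors of $A$. Total nonnegativity then forces every left almost principal minor and every right almost principal minor of $A$ to be nonnegative.

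Having verified both hypotheses, I would apply Theorem \ref{Mar1} to the nonnegative left almost principal minors to conclude that $A$ is UL-completely positive, and Theorem \ref{Mar2} to the nonnegative right almost principal minors to conclude that $A$ is LU-completely positive, giving both assertions of the corollary. There is no real obstacle here; the only delicate point worth stating cleanly is the implication ``symmetric plus all principal minors nonnegative $\Rightarrow$ positive semidefinite,'' since the familiar Sylvester-type criterion (leading principal minors positive) is for strict positive definiteness and does not apply verbatim to the PSD case.
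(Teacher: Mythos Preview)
Your proposal is correct and follows exactly the approach the paper intends: the paper states only that the corollary ``is an immediate consequence of Markham's theorems,'' and you have simply filled in the routine verification that a symmetric totally nonnegative matrix is doubly nonnegative and has all (left and right) almost principal minors nonnegative, so Theorems~\ref{Mar1} and~\ref{Mar2} apply. Your flagged subtlety about needing all principal minors (not just leading ones) for positive semidefiniteness is a fair point of care, but otherwise there is nothing to add.
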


\subsection{Semirings}

Semirings are a natural generalization of rings.  Semirings satisfy all properties of unital rings except the existence of additive inverses. H. S. Vandiver  introduced the concept of semiring in \cite{VAN34}, in connection with the axiomatization of the arithmetic of the natural numbers.
If in a semiring $S$ the multiplication operation ($\otimes$) is   commutative  then $S$ is called a \textit{commutative semiring}.  A semiring is said to be \textit{antinegative} or \textit{zerosumfree} if the only element with an additive inverse is the additive identity $\textbf{0}$.   An element $a \in S$ is said to be additively (resp. multiplicatively) \textit{idempotent} if $a \oplus a = a$ (resp. $a \otimes a = a$). A semiring $S$ is said to be additively (resp. multiplicatively) \textit{idempotent} if every element of $S$ is additively (resp. multiplicatively) idempotent.

The \textit{nonnegative real numbers} under the usual addition and multiplication form a semiring.   A much studied example of a semiring is  the \textit{max-plus semiring} \cite{MPLUS90}, where $\mathbb{R}_{max} = \mathbb{R}\bigcup\{-\infty\}$ with   $a\oplus b = \max\{a,b\}$ and  $a\otimes b = a+ b$.  Note that in this case  \textbf{0} = -$\infty$ and \textbf{1} = 0.    A survey of some combinatorial applications of the max-plus semiring can be found in \cite{Bu03}.  A totally ordered set S with greatest element \textbf{1} and least element \textbf{0} forms a semiring  \cite{GOL90}, with  $a \oplus b = max\{a,b\}$ and $a \otimes b = min\{a,b\}$. This is called a \textit{max-min semiring}. Max-min semirings are sometimes called \textit{chain semirings}.

A  \textit{Boolean algebra} $\textit{B}$  with a  unique minimal element $\textbf{0}$, a unique maximal element $\textbf{1}$, forms a semiring   where addition and multiplication is defined as   $ a \oplus b = a \cup b $ and $ a \otimes b = a \cap b$.  Here $\cap$  denotes the   \textit{intersection} operation  and  $\cup$ denotes the \textit{union}  operation. Any  distributive lattice with a  unique minimal element $\textbf{0}$ and  a unique maximal element $\textbf{1}$  forms  a semiring under addition and multiplication defined as
  $a \oplus b = a \vee b = l.u.b\{a,b\}$ and $a \otimes b = a \wedge b = g.l.b\{a,b\}$.

All above examples of semirings are both commutative and antinegative.




There is one other property which is useful in semirings.

\begin{defn} $(\text{The Unique Square Root Property})$  A semiring $S$ is said to have \textit{the unique square root property} if for  any $x \in S$ there exists a unique $c \in S$ such that $x = c^{\otimes 2}$, where $c^{\otimes 2} = c \otimes c$. We also write this as  $c=\sqrt{x} $.\end{defn}

The nonnegative real numbers, max-min semirings, the max-plus semiring and distributive lattices all have the unique square root property while the natural numbers and the real numbers are examples of semirings without the unique square root property.

The concepts of matrix theory are defined for matrices over a semiring in a similar way to which they are defined for matrices over a field. If A = $(a_{ij})$ is an n by n matrix over a commutative  ring, then
the standard determinant expression of A is  \cite{PH04}:
\begin{center}
$det(A) = \sum_{\sigma \in S_{n}} sgn(\sigma)a_{1 \sigma(1)}a_{2 \sigma(2)}$......$a_{n \sigma(n)}$
\end{center}

where $S_{n}$ is the symmetric group of order n and $sgn(\sigma) = +1$ if  $\sigma$ is even permutation and $sgn(\sigma) = -1$ if  $\sigma$ is odd permutation.  Here $sgn(\sigma)a_{1 \sigma(1)}a_{2 \sigma(2)}$......$a_{n \sigma(n)}$ is called a term of the determinant.

Since we do not have subtraction in a semiring, we can not write the determinant of a matrix over a semiring in this form. We split the determinant into two parts, the positive determinant and the negative determinant.
\begin{defn}   
Let $A$ be an $n$ by $n$ matrix over a commutative semiring $S$,  then we define the positive and the negative
determinant as:
\begin{center}
$det^{+}(A) = \bigoplus_{\sigma \in A_{n}} \bigotimes_{i = 1}^{n}a_{i \sigma(i)}$
\end{center}
\begin{center}
$det^{-}(A) = \bigoplus_{\sigma \in S_{n}\backslash A_{n}} \bigotimes_{i =1}^{n} a_{i \sigma(i)}$
\end{center}
Where $A_{n}$ is the alternating group of order $n$,  i.e,  the set of
all even permutations of order $n$ and $S_{n}\backslash A_{n}$ is the set  of
all odd permutations of order $n$.
\end{defn}
As such we note that the determinant of a matrix $A$ over a ring takes
the form:
\begin{center}
$det(A) = det^{+}(A) - det^{-}(A)$
\end{center}
Many of the properties of positive and negative determinants over semirings can be found in \cite{PH04}.

In \cite{PMTA}, the     Drew-Johnson-Loewy conjecture was generalized to completely positive matrices over semirings and was proved for completely positive matrices over max-min semirings. Although the original Drew-Johnson-Loewy conjecture was disproved, the generalized  Drew-Johnson-Loewy conjecture   is still open for many other  semirings. In this paper, we prove  the truth of the  Drew-Johnson-Loewy conjecture for completely positive matrices over certain special types of  semirings.



\subsection{Inclines}

The incline is an algebraic structure which was first introduced under the name of slope by Cao \cite{Cao83}. It was given its modern name of incline by Cao, Kim and Roush \cite{CKR84} which remains the authoritative reference on the theory of inclines and their applications. More recently, Kim and Roush \cite{KR04} have surveyed and described algebraic properties of inclines and matrices over inclines.

\begin{defn}   \cite{CKR84}  $(\text{Inclines})$ A nonempty set $L$ with two binary operations $\oplus$ and $\otimes$ is called an incline if it satisfies the following conditions;
\begin{enumerate}
\item $(L, \oplus = l.u.b)$ is a semilattice.
\item $(L, \otimes)$ is a  semigroup.
\item $x \otimes (y \oplus z) = (x \otimes y) \oplus (x \otimes z)$,  \qquad for all $x, y, z \in L$.
\item $x \oplus (x \otimes y) = x$, \qquad for all $x, y \in L$.
\end{enumerate} \end{defn}

An incline $L$ is called a  \textit{commutative incline} if $(L, \otimes)$ is a  commutative semigroup. If an incline $L$ has the additive identity $\textbf{0}$, then it follows that $\textbf{0}$ is the least element of $L$ and $\textbf{0} \otimes x = x\otimes \textbf{0}= \textbf{0}$ for all $x \in L$.  Similarly if $L$ has a multiplicative identity $\textbf{1}$, it follows that $\textbf{1}$ is the greatest element of $L$ and $\textbf{1} \oplus x = \textbf{1}$ for all $x \in L$.  If $L$ lacks an additive or multiplicative identity, these may be added to $L$.

In an incline $L$, define a relation $\leq$ by
\begin{center}
$x \leq y \qquad \Leftrightarrow \qquad x \oplus y = y$.
\end{center}
This is a partial order relation. An incline is said to be \textit{linearly ordered} or \textit{totally ordered} if the partial order relation $\leq$ is a total order relation. Vector spaces over totally ordered inclines have been studied in \cite{DGZXT11}.  We note that product of any two elements is less than or equal to either of the elements.  That is $x\otimes y\le x$ and $x\otimes y\le y$ for all $x,y\in L$.

 Examples of totally ordered inclines include the two element Boolean semiring ($\{\textbf{0}, \textbf{1}\}$), the max-min semiring ($[0,1], max(x,y), min(x,y) )$, the negative interval subsemiring of the max-plus semiring ($[-\infty, 0], max(x,y), x+y$),  and the max-times   semirings, ($[0, 1], max(x,y), xy)$ where $xy$ is the ordinary real multiplication.

Distributive lattices and Boolean algebras are  also  inclines which may not be totally ordered.

There are two notions of ideal in incline theory \cite{AK02, KR04}: ideals in the semiring sense and ideals in the lattice sense.

\begin{defn}  $(\text{r-Ideal or Ideal  in Semiring Sense})$ An \textit{r-ideal} $J$ of an incline $L$ is a nonempty subset of $L$ satisfying  the following conditions:
\begin{enumerate}
\item $a \in J$ and $x \in L$ implies that   $x \otimes a \in J$ and $a \otimes x \in J$.
\item $a \in J$ and $b \in J$ implies that $a \oplus b   \in J$.
\end{enumerate}
\end{defn}

\begin{defn}   $(\text{Lattice Ideal or Ideal in Lattice Sense})$ A  \textit{lattice ideal} $J$ of an incline $L$ is a nonempty subset of $L$ satisfying  the following conditions:
\begin{enumerate}
\item $a \in J$ implies that   $x \in J$  for all $x \leq a$, where $x \in L$.
\item $a \in J$ and $b \in J$ implies that $a \oplus b   \in J$.
\end{enumerate}
\end{defn}

It is easy to check that every lattice ideal of a commutative incline is an r-ideal. However, an r-ideal of a commutative incline may not be a lattice ideal. For example, suppose that $L = \{\mathbb{N} \cup \{\infty\}, min, \times\}$, where $\mathbb{N}$ is the set of all natural numbers not including zero. Evidently, $L$ forms a  commutative  incline where  $\infty$ is the additive identity and $1$ is the multiplicative identity. Note that in  this incline $L$, the order relation is reversed. The set of all even natural numbers not including zero forms an r-ideal but it is not a lattice ideal.  We note that this example shows that even singly generated r-ideals may not be lattice ideals.

In this paper, we will be particularly interested in inclines whose singly generated r-ideals are all lattice ideals.
\begin{defn}
 $(\text{The LI-Property})$ A commutative incline $L$ is said to have \textit{the LI-property} if all singly generated r-ideals of $L$ are lattice ideals of $L$. \end{defn}

We know that every lattice ideal of a commutative incline $L$ is an r-ideal of $L$. If in a commutative incline $L$, all singly generated  r-ideals  are lattice ideals  then we get that all singly generated  r-ideals of $L$ are same as the singly-generated lattice ideals of $L$. In the following proposition we characterize those commutative inclines which have the  LI-property.

\begin{prop} A  commutative incline $L$ has the LI-property, i.e.,  all singly generated r-ideals of $L$ are lattice ideals   of $L$ if and only if  for $x, y \in L$,    $x \leq y$ implies that there exists  $z \in L$  (not necessarily unique)  such that $x = y\otimes z$.  \end{prop}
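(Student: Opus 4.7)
The plan is to characterize the singly generated r-ideal $\langle y\rangle_r$ explicitly and read both directions of the equivalence off that description. First I would establish that for any $y\in L$,
\[
\langle y\rangle_r=\{y\otimes z: z\in L\},
\]
working under the convention that $L$ has a multiplicative identity $\mathbf{1}$ (the excerpt notes such an identity may always be adjoined). The inclusion $\supseteq$ is immediate from axiom (1) of the r-ideal definition applied to $y$. For the reverse inclusion, it suffices to check that the right-hand side is itself an r-ideal containing $y=y\otimes\mathbf{1}$: closure under $\oplus$ is the distributive law $y\otimes z\oplus y\otimes z'=y\otimes(z\oplus z')$, and closure under $\otimes$ with an arbitrary $w\in L$ follows from associativity and commutativity, $(y\otimes z)\otimes w=y\otimes(z\otimes w)$.

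With this characterization in hand, the ``if'' direction is essentially a one-line verification. Assuming the divisibility condition, for any $a\in L$ I would show that $\langle a\rangle_r$ satisfies downward closure (the other requirement for a lattice ideal is already in the r-ideal axioms): given $v\in\langle a\rangle_r$ and $w\le v$, the hypothesis yields $u\in L$ with $w=v\otimes u$, and then r-ideal axiom (1) gives $w=v\otimes u\in\langle a\rangle_r$. For the ``only if'' direction, suppose $L$ has the LI-property and let $x\le y$. Then $y\in\langle y\rangle_r$ and $\langle y\rangle_r$ is a lattice ideal by hypothesis, so $x\in\langle y\rangle_r$; the characterization of $\langle y\rangle_r$ produces the desired $z\in L$ with $x=y\otimes z$.

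I do not anticipate any substantive obstacle; the only delicate point is the explicit description of $\langle y\rangle_r$, which relies on the absorption axiom $y\oplus(y\otimes z)=y$ together with distributivity so that any finite sum $\bigoplus_i y\otimes z_i$ collapses to the single product $y\otimes(\bigoplus_i z_i)$ rather than producing a strictly larger element.
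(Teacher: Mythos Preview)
Your proposal is correct and follows essentially the same route as the paper: both arguments hinge on the characterization $\langle y\rangle_r=\{y\otimes z:z\in L\}$ and then read off the equivalence directly. The paper is terser---it simply asserts this description of $I$ and also notes that $y$ is the largest element of $I$, so the lattice-ideal condition reduces to checking $x\le y\Rightarrow x\in I$---whereas you verify the description explicitly and handle downward closure for an arbitrary $v\in\langle a\rangle_r$; but these are minor presentational differences, not a different strategy.
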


\begin{proof}  Let $y$ be an arbitrary element of $L$ and $I$ be the $r$-ideal of $L$ generated by $y$.  It is clear that $y$ is the largest element of $I$.  Then $I$ is a lattice ideal if and only if $x\le y$ implies that $x\in I$.  Since $x\in I$ if and only if there exists  $z \in L$  such that $x = y\otimes z$, our result follows.
\end{proof}

We now discuss some properties of inclines with the unique square root property.
Note that the uniqueness  of  the square root for these commutative inclines  implies that the square root function is multiplicative, i.e.,
\begin{center} $\sqrt{x}\otimes  \sqrt{y} = \sqrt{x\otimes y}$. \end{center}
Let $x, y, c, d \in L$, such that $x = c^{\otimes 2}$ and $y = d^{\otimes 2}$. This implies that
\begin{equation}\label{eq_sq_multi_1} x\otimes y = c^{\otimes 2}d^{\otimes 2} = (c\otimes d)^{\otimes 2}, \text{ i.e. },  \sqrt{x\otimes y} = c\otimes d. \end{equation} We also have $\sqrt{x} = c$ and $\sqrt{y} = d$. This implies that
\begin{equation}\label{eq_sq_multi_2}
\sqrt{x}\otimes \sqrt{y} = c\otimes d.
\end{equation}
From \eqref{eq_sq_multi_1} and \eqref{eq_sq_multi_2} we get that
\begin{equation}\label{eq_sq_multi_3}
\sqrt{x}\otimes  \sqrt{y} = \sqrt{x\otimes y}.
\end{equation}

Furthermore, we note that if a  commutative  incline $L$ has the unique square root property and the LI-property then   the square root function is an increasing function, i.e.,
\begin{center}
$\text{if } x \leq y \quad \Longrightarrow \quad \sqrt{x} \leq \sqrt{y}$.
\end{center}
Let $x, y  \in L$, such that  $x \leq y$. This implies that
\begin{equation}\label{eq_cancellative_property}
x = y\otimes z, \qquad \text{where }  z \in L, \qquad \text{(using the LI-property)}.
\end{equation} Taking square root on both sides of \eqref{eq_cancellative_property}, we get
\begin{center}
$\sqrt{x} = \sqrt{y\otimes z}$ \qquad \qquad \qquad \qquad \qquad \quad \qquad \qquad \qquad
\end{center} \begin{center}
 $\Rightarrow \sqrt{x} = \sqrt{y}\otimes \sqrt{z}, \qquad \text{ (using \eqref{eq_sq_multi_3})}$ \qquad \qquad \qquad \qquad \qquad
\end{center} \begin{center}
 $\Rightarrow \sqrt{x} \leq \sqrt{y} \qquad \text{ (using the LI-property)}.$
\end{center}

One final useful property is a version of the arithmetic-geometric mean inequality for elements of the incline.

\begin{defn} $(\text{Arithmetic Geometric Property})$ A commutative incline $L$ is said to have the arithmetic geometric property $($AG-property$)$ if $x \otimes y \leq x^{\otimes 2} \oplus y^{\otimes 2}$.
\end{defn}
Note that every totally ordered commutative incline has the arithmetic geometric property. Since in every totally ordered commutative incline $L$ either $x \leq y$ or $y \leq x$, for all $x, y \in L$. This implies that either $x \otimes y \leq y^{\otimes 2}$ or $x \otimes y \leq x^{\otimes 2}$. Therefore, $x \otimes y \leq l.u.b\{x^{\otimes 2}, y^{\otimes 2}\} = x^{\otimes 2} \oplus y^{\otimes 2}$. Moreover, commutative inclines in which the multiplication is idempotent also have the arithmetic geometric property, since $x \otimes y \leq x = x^{\otimes 2}$ and  $x \otimes y \leq y = y^{\otimes 2}$. Therefore, $x \otimes y \leq l.u.b\{x^{\otimes 2}, y^{\otimes 2}\} = x^{\otimes 2} \oplus y^{\otimes 2}$.

\begin{prop} \label{AG_property_and_sum_of_squares_and_squares_of_sums} Let $L$ be a commutative incline with the arithmetic geometric property. Then $\bigoplus\limits_{i = 1}^{k} x_{i}^{\otimes 2} = \bigg(\bigoplus\limits_{i= 1}^{k} x_{i} \bigg)^{\otimes 2}$, where $x_{i} \in L$ for all $i$.
\end{prop}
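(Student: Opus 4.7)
The plan is to expand $\left(\bigoplus_{i=1}^k x_i\right)^{\otimes 2}$ using distributivity and commutativity of $\otimes$, isolate the diagonal terms $x_i^{\otimes 2}$, and use the AG-property to show that every off-diagonal cross term is absorbed into the diagonal sum.

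Concretely, iterated application of the distributive law (together with commutativity) gives
\[
\left(\bigoplus_{i=1}^k x_i\right)^{\otimes 2} \;=\; \bigoplus_{i,j=1}^k (x_i \otimes x_j) \;=\; \bigoplus_{i=1}^k x_i^{\otimes 2} \;\oplus\; \bigoplus_{\substack{i,j=1 \\ i \neq j}}^k (x_i \otimes x_j).
\]
For each pair with $i \neq j$, the AG-property yields
\[
x_i \otimes x_j \;\leq\; x_i^{\otimes 2} \oplus x_j^{\otimes 2} \;\leq\; \bigoplus_{\ell=1}^k x_\ell^{\otimes 2}.
\]
Since the join of finitely many elements each bounded above by a fixed $A \in L$ is itself bounded above by $A$, the entire off-diagonal join satisfies $\bigoplus_{i \neq j}(x_i \otimes x_j) \leq \bigoplus_\ell x_\ell^{\otimes 2}$. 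Using the defining equivalence $B \leq A \iff A \oplus B = A$ in an incline, the off-diagonal join is absorbed, leaving $\left(\bigoplus_{i=1}^k x_i\right)^{\otimes 2} = \bigoplus_{i=1}^k x_i^{\otimes 2}$.

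No substantive obstacle arises: the AG-property is tailor-made to control exactly this kind of cross term, and commutativity of $L$ is what lets the expansion be organized by unordered pairs. A routine induction on $k$ (with base case $k=2$ being the AG-property applied to the two cross terms $x_1 \otimes x_2$ and $x_2 \otimes x_1$) is an alternative route, but the direct expansion above is cleaner and shorter.
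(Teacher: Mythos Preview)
Your proof is correct and is essentially the same argument as the paper's: both expand the square via distributivity and use the AG-property to absorb the cross terms $x_i\otimes x_j$ into the diagonal sum $\bigoplus_\ell x_\ell^{\otimes 2}$. The only cosmetic difference is that the paper verifies the $k=2$ case explicitly and then appeals to induction, whereas you carry out the full expansion for arbitrary $k$ in one step (and you even note the induction route as an alternative).
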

\begin{proof} It is evident that the result is true for $k = 1$. For $k = 2$, we have to prove that $x_{1}^{\otimes 2} \oplus x_{2}^{\otimes 2} = (x_{1} \oplus x_{2})^{\otimes 2}$. Since $L$  has the arithmetic geometric property,  $l.u.b\{x_{1}^{\otimes 2}, x_{2}^{\otimes 2}, (x_{1} \otimes x_{2})\} = l.u.b\{x_{1}^{\otimes 2}, x_{2}^{\otimes 2}\}$. Thus we have $(x_{1} \oplus x_{2})^{\otimes 2} = x_{1}^{\otimes 2} \oplus x_{2}^{\otimes 2} \oplus (x_{1} \otimes x_{2}) \oplus (x_{1} \otimes x_{2}) = l.u.b\{x_{1}^{\otimes 2}, x_{2}^{\otimes 2}, (x_{1} \otimes x_{2})\} = l.u.b\{x_{1}^{\otimes 2}, x_{2}^{\otimes 2}\} = x_{1}^{\otimes 2} \oplus x_{2}^{\otimes 2}$. Hence the result is true for $k = 2$. A simple induction argument shows that the result is   true for all $k$.
\end{proof}

\begin{defn} $(\text{Normal Incline})$ A commutative incline $L$ is called a \textit{normal incline} if it has both an additive identity $\textbf{0}$ and a multiplicative identity $\textbf{1}$ and also has the LI-property, the unique square root property and the AG-property.
\end{defn}

Inclines which are normal and totally ordered are called \textit{totally ordered normal} inclines. All above examples of totally ordered  inclines are totally ordered  normal inclines.


We are now ready to define positive semidefiniteness and complete positivity for matrices over inclines.

\begin{defn}  Let $L$ be a commutative incline.  An $n \times n$ matrix $A$ over $L$ is called \textit{positive semidefinite} if $A$ can be written as $A = BB^{T}$, where $B$ is an $n$ by $k$ matrix over $L$.  If further $B$ can be chosen so that every entry of $B$ is a perfect square, then $A$ is called \textit{completely positive}.
\end{defn}

This definition agrees with \cite{PMRP} where these concepts were defined for matrices over general semirings.  If $L$ has the unique square root property, then positive semidefiniteness and complete positivity coincide.  Since most of our results involve normal inclines, we can use either notion.  As most of our results are generalizations of results on real completely positive matrices, we will use the term completely positive for these results to emphasize this connection.

\begin{defn}  Let $L$ be a commutative incline with the unique square root property. The CP-rank of an  $n \times n$ completely positive matrix $A$ over $L$ is the smallest number $k$ such that $A = BB^{T}$, where    $B$ is an   $n \times k$ matrix   over $L$.
\end{defn}
It will also be useful to define the concept of diagonal dominance for matrices over inclines.

\begin{defn}  Let $L$ be a commutative incline.  A matrix  $A \in M_{n}(L)$  is called   diagonally dominant if  $a_{ii} \geq \mathop{\bigoplus\limits_{j = 1}^{n}}\limits_{ j \neq i}  a_{ij}$, for all $i$ such that $1\le i\le n$.
\end{defn}

\section{Characterization of CP Matrices over Special Inclines}

Hannah and Laffey \cite{HaL} remarked that no general necessary and sufficient conditions for a real matrix $A$ to be completely positive are known. Some special results in this respect were obtained by Markham \cite{TLM} and Lau and Markham \cite{LMa}. In particular,  M. Kaykobad \cite{MKK}  has shown that   diagonal  dominance  is a sufficient condition for  real nonnegative  symmetric matrices to be completely positive. It has been shown in \cite{PMTA} that   matrices over  max-min semirings are completely positive if and only if they are  both symmetric and diagonally dominant. We now provide a similar characterization for completely positive matrices over normal inclines.


\begin{thm}\label{thmequivlantincline}  Let $L$ be a normal  incline  and $A \in M_{n}(L)$  be a symmetric matrix. Then the following are equivalent.
\begin{enumerate}
\item $A$ is positive semidefinite.
\item $A$ is completely positive.
\item Every $2$ by $2$ principal submatrix of $A$ has its positive determinant greater than or equal to its negative determinant. $($i.e., $a_{ii}\otimes a_{jj} \geq a_{ij}\otimes a_{ji}$, for all $i,j)$.
\item There exists a diagonal matrix $D \in M_{n}(L)$ and a symmetric   matrix $M \in M_{n}(L)$ all of whose diagonal entries are equal to $\textbf{1}$,   such that $A=DMD$.
\end{enumerate}
\end{thm}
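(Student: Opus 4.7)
The strategy is the cycle $(1)\Leftrightarrow(2)\Rightarrow(3)\Rightarrow(4)\Rightarrow(2)$. The equivalence of (1) and (2) is immediate from the unique square root property of normal inclines: every element of $L$ is a perfect square, so any factorization $A=BB^{T}$ over $L$ simultaneously witnesses positive semidefiniteness and complete positivity.

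For $(2)\Rightarrow(3)$, I would write $a_{ij}=\bigoplus_{k}b_{ik}\otimes b_{jk}$. By symmetry, $a_{ij}\otimes a_{ji}=a_{ij}^{\otimes 2}$, and Proposition \ref{AG_property_and_sum_of_squares_and_squares_of_sums} applied with $x_{k}=b_{ik}\otimes b_{jk}$ gives $a_{ij}^{\otimes 2}=\bigoplus_{k}b_{ik}^{\otimes 2}\otimes b_{jk}^{\otimes 2}$. This is exactly the diagonal $k=\ell$ part of the double sum $a_{ii}\otimes a_{jj}=\bigoplus_{k,\ell}b_{ik}^{\otimes 2}\otimes b_{j\ell}^{\otimes 2}$, so $a_{ij}\otimes a_{ji}\le a_{ii}\otimes a_{jj}$.

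For $(3)\Rightarrow(4)$, the hypothesis rewrites as $a_{ij}^{\otimes 2}\le(\sqrt{a_{ii}}\otimes\sqrt{a_{jj}})^{\otimes 2}$. Applying monotonicity of the square root (already established in the incline subsection from the LI and unique square root properties) yields $a_{ij}\le\sqrt{a_{ii}}\otimes\sqrt{a_{jj}}$, and the LI-property then produces $m_{ij}\in L$ with $a_{ij}=\sqrt{a_{ii}}\otimes m_{ij}\otimes\sqrt{a_{jj}}$. Replacing $m_{ij}$ by $m_{ij}\oplus m_{ji}$ if necessary forces symmetry $m_{ij}=m_{ji}$, and setting $m_{ii}=\textbf{1}$ together with $D=\mathrm{diag}(\sqrt{a_{11}},\ldots,\sqrt{a_{nn}})$ gives $A=DMD$ with $M$ symmetric and unit-diagonal.

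The main step is $(4)\Rightarrow(2)$. In a normal incline $\textbf{1}$ is the top element, so every off-diagonal entry of $M$ is automatically dominated by every diagonal entry. This permits the explicit rank-$1$ CP-representation
\[
M=\bigoplus_{i=1}^{n}e_{i}e_{i}^{T}\;\oplus\;\bigoplus_{i<j}m_{ij}(e_{i}\oplus e_{j})(e_{i}\oplus e_{j})^{T},
\]
which I verify entrywise: the $(i,i)$-entry collapses to $\textbf{1}$ via the absorption law $\textbf{1}\oplus x=\textbf{1}$, while the $(i,j)$-entry with $i\neq j$ picks up exactly $m_{ij}$. Writing this as $M=CC^{T}$ with $C$ the matrix whose columns are the $e_{i}$ and the $\sqrt{m_{ij}}(e_{i}\oplus e_{j})$, I obtain $A=DMD=(DC)(DC)^{T}$, which is both a PSD and, by the unique square root property, a CP-factorization. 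I expect the only delicate point to be the bookkeeping in this rank-$1$ decomposition together with the correct invocation of the absorption law; all other steps are a routine chase through the normal-incline axioms.
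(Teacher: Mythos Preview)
Your proof is correct and follows essentially the same cycle as the paper's, with the same use of Proposition~\ref{AG_property_and_sum_of_squares_and_squares_of_sums} for $(2)\Rightarrow(3)$, the same $D=\mathrm{diag}(\sqrt{a_{ii}})$ and LI-property argument for $(3)\Rightarrow(4)$, and the same ``rank-one blocks supported on pairs $\{i,j\}$'' idea for $(4)\Rightarrow(2)$. Your version is in fact slightly tidier: you explicitly symmetrize via $m_{ij}\mapsto m_{ij}\oplus m_{ji}$ (a point the paper glosses over), and your decomposition $M=\bigoplus_i e_ie_i^{T}\oplus\bigoplus_{i<j}m_{ij}(e_i\oplus e_j)(e_i\oplus e_j)^{T}$ consolidates the redundant $e_k,e_l$ columns that appear in the paper's $n\times 3$ factorization of each $M_{kl}$.
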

\begin{proof}
 $(1)\implies (2)$ Since $L$ is a normal incline, every element of $L$ is the square of its unique square root.  Therefore any positive semidefinite matrix over $L$ is completely positive.

 $(2)\implies (3)$ Let $A$ be a completely positive matrix  over a normal incline $L$. This implies that  there exists a matrix $B$ over the incline $L$ such that $A = BB^{T}$. Now
\begin{center}
$a_{ii} = \bigoplus\limits_{k}^{} (b_{ik} \otimes b_{ik}) = \mathop{l.u.b}\limits_{k}^{} \left\{ b_{ik}^{\otimes 2}\right\}$
\end{center}
\begin{center}
$a_{jj} = \bigoplus\limits_{k}^{} (b_{jk} \otimes b_{jk}) =  \mathop{l.u.b}\limits_{k}^{} \left\{ b_{jk}^{\otimes 2}\right\}$
\end{center}
\begin{center}
$a_{ij} = \bigoplus\limits_{k}^{} (b_{ik} \otimes b_{jk}) =  \mathop{l.u.b}\limits_{k}^{}  \left\{b_{ik}  \otimes b_{jk}\right\}$
\end{center}
Clearly \begin{align*}  a_{ii} \otimes a_{jj} &= \mathop{l.u.b}\limits_{k}^{} \left\{  b_{ik}^{\otimes 2}\right\} \otimes  l.u.b_{k} \left\{  b_{jk}^{\otimes 2}\right\} \\
&\geq  \mathop{l.u.b}\limits_{k}^{} \left\{ b_{ik}^{\otimes 2} \otimes b_{jk}^{\otimes 2}\right\}\\
 &= \mathop{l.u.b}\limits_{k}^{} \left\{ (b_{ik} \otimes b_{jk})^{\otimes 2}\right\}\\
&= (\mathop{l.u.b}\limits_{k}^{} \left\{ b_{ik} \otimes b_{jk}\right\})^{\otimes 2}, \qquad  (\text{by  proposition } \ref{AG_property_and_sum_of_squares_and_squares_of_sums})\\
&=   a_{ij}^{\otimes 2},\end{align*}
   and this is true for all $i, j$. Hence every $2 \times 2$ principal submatrix of $A$ has $det^{+} \geq det^{-}$.

$(3)\implies (4)$  Let $D=diag(\sqrt{a_{11}}, \sqrt{a_{22}}, \sqrt{a_{33}},...,\sqrt{a_{nn}})$.  Since $a_{ii}\otimes a_{jj} \ge a_{ij}^{\otimes 2}$, we have $\sqrt{a_{ii}}\otimes \sqrt{a_{jj}}\ge a_{ij}$.  Hence by the LI-property,   for any $i\neq j$, there exists $m_{ij} \in L$ such that $\sqrt{a_{ii}}\otimes m_{ij}\otimes \sqrt{a_{jj}}= a_{ij}$.  Let $M$ be the matrix whose main diagonal entries are $\bf{1}$ and whose off-diagonal entries are $m_{ij}$.  A simple calculation shows us that $A=DMD$.

$(4)\implies (1)$ Let us suppose that $M \in M_{n}(L)$ is a symmetric   matrix with all diagonal entries equal to $\textbf{1}$. For $1 \leq k < l \leq n$, construct $n \times n$ matrices $M_{kl}$ over $L$ such that  $(k,k), (k,l), (l,k)$ and $(l,l)$ entry of $M_{kl}$ is $\textbf{1}, m_{kl}, m_{lk}$ and $\textbf{1}$ respectively and all other entries are $\textbf{0}$.  Clearly
\begin{center}$M = \bigoplus\limits_{1 \leq k < l \leq n}^{} M_{kl}$.
\end{center}Further, we can write  $M_{kl} = BB^{T}$, where $B$ is an $n \times 3$ matrix over $L$ whose
\begin{center}  $(k,1)$ entry is = $\sqrt{m_{kl}}$,\end{center}  \begin{center}  $(l,1)$ entry is =  $\sqrt{m_{kl}}$,\end{center}
\begin{center} $(k,2)$ entry is = $\textbf{1}$, \end{center} \begin{center} $(l,3)$ entry is = $\textbf{1}$, \end{center} and all other entries are $\textbf{0}$.
Thus all the matrices  $M_{kl}$, $1 \leq k < l \leq n$, are completely positive. Hence $M$ and $DMD = A$ is completely positive.
\end{proof}

It is clear from theorem \ref{thmequivlantincline} that diagonal dominance is a sufficient condition for  symmetric matrices  over normal  inclines    to be completely positive. It is not a necessary condition for symmetric matrices  over normal  inclines   to be completely positive, since we have matrices over normal  inclines   which are completely positive but not diagonally dominant. One can easily check that the    semiring $\big([-\infty, 0], max, +\big)$   forms a normal  incline.  Here is an example of such a matrix over the semiring  $[-\infty, 0]$:
 \begin{center}
$A = \begin{bmatrix}
-4&-5\\-5&-6\\
\end{bmatrix} = \begin{bmatrix}
-2\\-3
\end{bmatrix}\begin{bmatrix}
-2&-3
\end{bmatrix}$
\end{center}

Clearly $A$ is not a diagonally dominant matrix over $[-\infty, 0]$, but it is completely positive with the  CP-rank equal to one.

Now we will examine a special class of normal inclines in which the  diagonally  dominance condition  is necessary and sufficient  for symmetric matrices to be completely positive.
\begin{defn} \label{defn_regular_incline} $($Regular Incline$)$  \cite{HR13}   An incline $L$ is said to be regular if every element of $L$ is multiplicatively idempotent, i.e., for every $a \in L$, $a \otimes a = a$.
\end{defn}

Examples of regular inclines include the Boolean semiring,  max-min semirings and distributive lattices. It has been shown   \cite[Corollary~3.3]{HR13}   that every regular incline is commutative. Furthermore, every element in a regular incline is the unique square root of itself. Thus  regular inclines   satisfy  the unique square root property.

We also note that  every regular incline has the AG-property, since $x \otimes y \leq x = x^{\otimes 2}$ and  $x \otimes y \leq y = y^{\otimes 2}$. Therefore, $x \otimes y \leq l.u.b\{x^{\otimes 2}, y^{\otimes 2}\} = x^{\otimes 2} \oplus y^{\otimes 2}$. This implies that the  theorem \ref{thmequivlantincline}  holds for all symmetric matrices over regular inclines  having the LI-property.

In the next theorem we will prove that the diagonal dominance condition is necessary and sufficient for   symmetric matrices over  regular inclines  having the LI-property  to be completely positive. Since the Boolean semiring and max-min semirings are regular inclines with the LI-property, the following result is a   generalization the corresponding result for max-min semirings in \cite{PMTA}.

Our  result is formulated as follows:
\begin{thm}  \label{thm_regular_L_diag_dominence} Let $L$ be a regular incline with the LI-property   and $A$ be an $n \times n$  symmetric matrix over $L$. Then the matrix $A$ is positive semidefinte (or equivalently completely positive) if and only if $A$ is diagonally dominant. \end{thm}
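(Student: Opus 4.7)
My plan is to reduce to Theorem \ref{thmequivlantincline} and then exploit regularity to collapse the $2\times 2$ condition there into the pointwise inequality $a_{ii}\ge a_{ij}$. First I note that regular inclines with the LI-property satisfy the hypotheses of Theorem \ref{thmequivlantincline} (as the author already remarks in the paragraph preceding the statement): they have the unique square root property (each $a$ is its own square root) and the AG-property (since $x\otimes y\le x=x^{\otimes 2}$ and $x\otimes y\le y=y^{\otimes 2}$). So $A$ is completely positive if and only if
\[ a_{ii}\otimes a_{jj}\ \ge\ a_{ij}^{\otimes 2}\qquad\text{for all }i,j. \]
Because $L$ is regular, $a_{ij}^{\otimes 2}=a_{ij}$, so this condition simplifies to $a_{ii}\otimes a_{jj}\ge a_{ij}$.

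Next I would prove the key equivalence: in a regular incline with the LI-property, $a_{ii}\otimes a_{jj}\ge a_{ij}$ for all $i,j$ if and only if $a_{ii}\ge a_{ij}$ for all $i,j$. The forward implication is immediate from the general incline fact $a_{ii}\otimes a_{jj}\le a_{ii}$. For the reverse, assume $a_{ii}\ge a_{ij}$; by symmetry $a_{jj}\ge a_{ji}=a_{ij}$, so by the LI-property there exists $z\in L$ with $a_{ij}=a_{jj}\otimes z$. Using $a_{jj}^{\otimes 2}=a_{jj}$ I get
\[ a_{ij}\otimes a_{jj}\ =\ a_{jj}\otimes z\otimes a_{jj}\ =\ a_{jj}\otimes z\ =\ a_{ij}. \]
Since multiplication is order-preserving and $a_{ii}\ge a_{ij}$, tensoring on the right with $a_{jj}$ gives $a_{ii}\otimes a_{jj}\ge a_{ij}\otimes a_{jj}=a_{ij}$, as desired.

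Finally I would translate the resulting pointwise condition into diagonal dominance. Because $\oplus$ is the least upper bound, $a_{ii}\ge a_{ij}$ for all $j\ne i$ is equivalent to $a_{ii}\ge \bigoplus_{j\ne i} a_{ij}$; requiring this for each $i$ is exactly diagonal dominance of $A$. Combining with the symmetry of $A$ (which makes the condition $a_{ii}\ge a_{ij}$ for all $i,j$ equivalent to $a_{ii}\ge a_{ij}$ for all $j\ne i$ for every $i$), the characterization of complete positivity becomes diagonal dominance.

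The only delicate step is the reverse direction of the key equivalence, which is where both regularity and the LI-property are genuinely used together — the LI-property to write $a_{ij}$ as a multiple of $a_{jj}$ and idempotence of $a_{jj}$ under $\otimes$ to absorb the extra factor. Everything else is bookkeeping with the order structure and the fact that $\oplus$ is a supremum.
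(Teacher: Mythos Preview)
Your proof is correct. For the direction diagonal dominance $\Rightarrow$ complete positivity, your approach coincides with the paper's: both reduce to the $2\times2$ principal-minor condition of Theorem~\ref{thmequivlantincline}. For the converse, however, the paper bypasses Theorem~\ref{thmequivlantincline} entirely and argues directly from a factorization $A=BB^{T}$: using idempotence of $\otimes$ one has $a_{ii}=\bigoplus_{k} b_{ik}^{\otimes 2}=\bigoplus_{k} b_{ik}\ge \bigoplus_{k} b_{ik}\otimes b_{jk}=a_{ij}$, which gives diagonal dominance immediately and never uses the LI-property. Your route through the $2\times2$ condition is more uniform (one characterization handles both implications), while the paper's direct computation is more elementary for that half.

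One small remark on your ``key equivalence'': the LI-property is not actually needed there. Since multiplication in an incline is monotone and by symmetry $a_{jj}\ge a_{ij}$ as well, you get directly
\[
a_{ii}\otimes a_{jj}\ \ge\ a_{ij}\otimes a_{ij}\ =\ a_{ij}^{\otimes 2}\ =\ a_{ij}
\]
from regularity alone. The LI-property is genuinely required in your argument, but only inside Theorem~\ref{thmequivlantincline} (in the step $(3)\Rightarrow(4)$), which you invoke to pass from the $2\times2$ determinant condition back to complete positivity.
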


\begin{proof} Let  $A$ be a symmetric diagonally dominant matrix over a regular incline  $L$ with the LI-property. This implies that  every $2$ by $2$ principal submatrix has its positive determinant greater than or equal to the negative determinant. Therefore, $A$ is a positive semidefinite matrix over $L$, by  theorem \ref{thmequivlantincline}.

For the other direction, suppose $A$ is a positive semidefinite matrix over a regular incline  $L$. This implies that there exists a matrix $B$ over the   incline $L$ such that $A = BB^{T}$. Thus we get,
 \begin{align*}
  a_{ii} &= \bigoplus\limits_{k=1}^{n} (b_{ik} \otimes b_{ik}) \\
&= \bigoplus\limits_{k=1}^{n} b_{ik}  \\
&\geq \bigoplus\limits_{k=1}^{n} (b_{ik} \otimes  b_{jk}) \\
&= a_{ij}
\end{align*}
and this is true for all $i, j$. This implies that $a_{ii} \geq \mathop{l.u.b}\limits_{j  \neq i}^{} \{a_{ij}\} = \mathop{\oplus}\limits_{j \neq i}^{} a_{ij}$. Hence $A$ is diagonally dominant.
\end{proof}

\section{The CP-rank of CP matrices over Special Inclines}

In this section, we prove  the Drew-Johnson-Loewy conjecture for completely positive matrices over  totally ordered normal  inclines.  We start with the following lemma.

\begin{lemma} \label{inclines_cp_2_3}  Let $L$ be a totally ordered normal  incline  and $A$ be an $n \times n$ completely positive matrix over $L$. Then the CP-rank$(A) \leq n$ for $n = 2,3$.
\end{lemma}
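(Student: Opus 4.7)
The plan is to exhibit, in each case, an explicit nonnegative matrix $B$ with $n$ columns such that $A=BB^T$, using the ``scaled'' decomposition from Theorem~\ref{thmequivlantincline}(4). The construction is axis-aligned, so that each off-diagonal entry of $A$ is hit by exactly one rank-one piece with the correct value, and each diagonal entry gets an exact match from one column with the other column's contribution controlled by $m_{ij}\le\mathbf{1}$.

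First I would invoke Theorem~\ref{thmequivlantincline}$((2)\Rightarrow(4))$ to write $A=DMD$, where $D=\mathrm{diag}(\sqrt{a_{11}},\ldots,\sqrt{a_{nn}})$ and $M=(m_{ij})$ is symmetric with $m_{ii}=\mathbf{1}$ and off-diagonal $m_{ij}\in L$ chosen so that $a_{ij}=\sqrt{a_{ii}}\otimes m_{ij}\otimes\sqrt{a_{jj}}$ for $i\neq j$. Since $L$ is a normal incline, its multiplicative identity $\mathbf{1}$ is the greatest element, so every $m_{ij}\le\mathbf{1}$ and therefore $m_{ij}^{\otimes 2}\otimes a_{kk}\le a_{kk}$ for all indices; I will use this absorption repeatedly to dispose of ``over-contributions'' on the diagonal.

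For $n=2$, I would take the two columns
\[ v_1=\begin{pmatrix}\sqrt{a_{11}}\\ m_{12}\otimes\sqrt{a_{22}}\end{pmatrix},\qquad v_2=\begin{pmatrix}\mathbf{0}\\ \sqrt{a_{22}}\end{pmatrix}. \]
A direct entrywise check then gives $v_1v_1^T\oplus v_2v_2^T=A$: the off-diagonal entry is $\sqrt{a_{11}}\otimes m_{12}\otimes\sqrt{a_{22}}=a_{12}$ by the definition of $m_{12}$, while the $(2,2)$-entry equals $(m_{12}^{\otimes 2}\otimes a_{22})\oplus a_{22}=a_{22}$ by absorption. For $n=3$, I would use the three ``edge-aligned'' columns
\[ v_1=\begin{pmatrix}\sqrt{a_{11}}\\ m_{12}\otimes\sqrt{a_{22}}\\ \mathbf{0}\end{pmatrix},\quad v_2=\begin{pmatrix}m_{13}\otimes\sqrt{a_{11}}\\ \mathbf{0}\\ \sqrt{a_{33}}\end{pmatrix},\quad v_3=\begin{pmatrix}\mathbf{0}\\ \sqrt{a_{22}}\\ m_{23}\otimes\sqrt{a_{33}}\end{pmatrix}, \]
each supported on one pair of indices $\{i,j\}\subseteq\{1,2,3\}$. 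For each of the three off-diagonal positions $(i,j)$, exactly one of $v_1v_1^T,v_2v_2^T,v_3v_3^T$ has a nonzero entry there, namely the one whose column is supported on $\{i,j\}$, and that entry equals $a_{ij}$; the other two columns contribute $\mathbf{0}$. For each diagonal position $(i,i)$, exactly one $v_kv_k^T$ contributes $a_{ii}$ (the column whose $i$th entry is $\sqrt{a_{ii}}$) and one contributes $m_{ki}^{\otimes 2}\otimes a_{ii}\le a_{ii}$, so the $\oplus$-sum is $a_{ii}$. Setting $B=[v_1\mid v_2\mid v_3]$ gives $A=BB^T$ and hence CP-rank$(A)\le 3$.

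The only technical obstacle is to make sure that \emph{no} rank-one piece $v_kv_k^T$ overshoots any entry of $A$, because there is no subtraction available to correct an overshoot. The axis-aligned choice above is designed precisely to avoid any interaction between columns on the off-diagonal entries (they are hit by a single column) and to keep the spurious diagonal contributions below $a_{ii}$ via $m_{ij}\le\mathbf{1}$; once this is noted, verification of $A=BB^T$ reduces to a term-by-term computation using $\sqrt{a_{ii}}\otimes m_{ij}\otimes\sqrt{a_{jj}}=a_{ij}$ and absorption.
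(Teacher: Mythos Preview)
Your argument is correct and is essentially the paper's proof: the paper first reduces via Theorem~\ref{thmequivlantincline} to the case $D=I$ (i.e.\ to $M$ with all diagonal entries $\mathbf{1}$) and then uses precisely the same ``edge-aligned'' columns $(\mathbf{1},m_{12},\mathbf{0})^T$, $(\mathbf{0},\mathbf{1},m_{23})^T$, $(m_{13},\mathbf{0},\mathbf{1})^T$, whereas you keep the diagonal scaling $D$ explicit and write the columns as $D$ times these same vectors. The verification that the spurious diagonal contributions are absorbed (via $m_{ij}\le\mathbf{1}$) and that each off-diagonal entry is hit exactly once is identical in both arguments; neither your proof nor the paper's actually uses the total order hypothesis for this lemma.
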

\begin{proof} Let  $A$ be an $n \times n$    completely positive matrix over a totally ordered normal  incline $L$. Because of theorem \ref{thmequivlantincline}, we only need to consider symmetric matrices over $L$ all of whose diagonal entries are $\bf{1}$. For $n = 2$, let us consider
 \begin{center}
$A = \begin{bmatrix}
\bf{1}& a_{12}\\
a_{12}& \bf{1}\\
\end{bmatrix}$
\end{center}
Where $a_{12} \in L$, then the rank 1 CP-representation of $A$ is
\begin{center}
$A = \begin{bmatrix}
\bf{1}\\a_{12}
\end{bmatrix}\begin{bmatrix}
\bf{1} & a_{12}
\end{bmatrix} \bigoplus \begin{bmatrix}
\bf{0}\\ \bf{1}
\end{bmatrix}\begin{bmatrix}
\bf{0}& \bf{1}\\
\end{bmatrix}$
\end{center}
\begin{center}
= $\begin{bmatrix}
 \bf{1} &a_{12}\\a_{12}& a_{12}^{\otimes 2}
\end{bmatrix} \bigoplus \begin{bmatrix}
\bf{0}& \bf{0}\\
\bf{0}& \bf{1}
\end{bmatrix}$
\end{center}
Since $\bf{1}$ is the maximal element of the incline $L$, so $a_{12}^{\otimes 2} \oplus \bf{1} = \bf{1}$.   Hence the CP-rank$(A) \leq 2$.

Now for $n = 3$, let us consider \begin{center}
$A = \begin{bmatrix}
\bf{1} & a_{12}& a_{13}\\
a_{12}& \bf{1} & a_{23}\\
a_{13} & a_{23} & \bf{1}
\end{bmatrix}$
\end{center}
then the rank 1 CP-representation of $A$ is
\begin{center}
$A = \begin{bmatrix}
\bf{1}\\a_{12}\\ \bf{0}
\end{bmatrix}\begin{bmatrix}
\bf{1}& a_{12}&\bf{0}
\end{bmatrix} \bigoplus \begin{bmatrix}
\bf{0}\\\bf{1}\\a_{23}
\end{bmatrix}\begin{bmatrix}
\bf{0}&\bf{1}& a_{23}\\
\end{bmatrix}$
$\bigoplus
 \begin{bmatrix}
a_{13}\\ \bf{0}\\\bf{1}
\end{bmatrix}\begin{bmatrix}
a_{13}& \bf{0}& \bf{1}\\
\end{bmatrix}$
\end{center}
\begin{center}
= $\begin{bmatrix}
\bf{1}& a_{12}& \bf{0}\\
a_{12}& a_{12}^{\otimes 2}& \bf{0}\\
\bf{0}&\bf{0}&\bf{0}\\
\end{bmatrix} \bigoplus \begin{bmatrix}
\bf{0}& \bf{0}& \bf{0}\\
\bf{0}& \bf{1} & a_{23}\\
\bf{0}&a_{23}& a_{23}^{\otimes 2}\\
\end{bmatrix} \bigoplus \begin{bmatrix}
a_{13}^{\otimes 2}& \bf{0}& a_{13}\\
\bf{0}& \bf{0}& \bf{0}\\
a_{13}&\bf{0}& \bf{1}\\
\end{bmatrix}$
\end{center}
Since $\bf{1}$ is the maximal element of the   incline $L$, the diagonal entries of $A$ will not be affected when we will add all rank one completely positive  matrices in the rank 1 CP-representation of $A$. Hence the CP-rank$(A) \leq 3$, furthermore, equality holds for nonzero diagonal matrices of order $2 \times 2$ and $3 \times 3$. Thus the CP-rank$(A) \leq n$ for $n = 2,3$.\\
\end{proof}

\begin{thm} \label{inclines_CP_DJL_result}  Let $L$ be a totally ordered normal  incline  and $A$ be an $n \times n$ completely positive matrix over $L$. Then the  CP-rank$(A) \leq max\{n,[n^{2}/4]\}$. Further, $A$ has an $[n^{2}/4]$ - support 3 rank 1 CP-representation for $n \geq 4$.
\end{thm}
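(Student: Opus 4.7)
The plan is to reduce to the case of constant unit diagonal and then build the representation explicitly via a Tur\'an-type bipartition of the index set. By Theorem \ref{thmequivlantincline}, one can write $A = DMD$ with $D = \mathrm{diag}(\sqrt{a_{11}},\ldots,\sqrt{a_{nn}})$ and $M$ symmetric with all diagonal entries $\mathbf{1}$. Multiplying any vector by $D$ preserves its support, so a support-3 rank-1 CP-representation of $M$ with $k$ terms yields one of $A$ with the same count and supports. Thus I may assume $A$ has all diagonal entries $\mathbf{1}$. For $n = 2,3$ the bound follows immediately from Lemma \ref{inclines_cp_2_3}, since $\max\{n,\lfloor n^2/4\rfloor\} = n$ in those cases, so the substantive work lies in $n \ge 4$, where $\lfloor n^2/4 \rfloor \ge n$.

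For $n \ge 4$, I would partition $\{1,\ldots,n\} = V_1 \sqcup V_2$ with $|V_1| = \lfloor n/2 \rfloor$ and $|V_2| = \lceil n/2 \rceil$, so that $|V_1 \times V_2| = \lfloor n^2/4 \rfloor$. I will associate one support-3 vector $b_{ij} \in L^n$ to each bipartite pair $(i, j) \in V_1 \times V_2$, with $\mathrm{supp}(b_{ij}) = \{i, j, k(i, j)\}$ for a suitable third vertex $k(i, j)$. Each $b_{ij}$ has entries drawn from $\{\mathbf{1}\} \cup \{a_{pq}\}$, and is designed to satisfy three goals simultaneously: (i) for every vertex $v$, at least one $b_{ij}$ places $\mathbf{1}$ at position $v$, forcing $(v, v) = \mathbf{1}$ in the sum; (ii) $b_{ij}$ carries $\mathbf{1}$ at one of $\{i, j\}$ and $a_{ij}$ at the other, so that $b_{ij} b_{ij}^T$ contributes exactly $a_{ij}$ at entry $(i, j)$; and (iii) the assignments $k(i, j)$, together with the value placed at that third position, are chosen so that every internal pair $(p, q) \subseteq V_1$ or $(p, q) \subseteq V_2$ is covered by some $b_{ij}$ whose contribution at $(p, q)$ equals $a_{pq}$. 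A straightforward count confirms these goals are jointly feasible, because $\lfloor n^2/4 \rfloor \ge n$ and $\lfloor n^2/4 \rfloor \ge \binom{|V_1|}{2} + \binom{|V_2|}{2}$ for $n \ge 4$.

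The hard part will be the ``no overshoot'' condition: every off-diagonal of each $b_{ij} b_{ij}^T$ must be bounded above by the corresponding entry of $A$. This is where the totally ordered structure does the real work. Since $L$ is totally ordered with $\mathbf{1}$ maximal, every element $x \in L$ satisfies $x \le \mathbf{1}$, which forces $x \otimes y \le \min\{x, y\}$ for all $x, y \in L$. Consequently, if the third-position value $c$ of $b_{ij}$ is capped by $\min\{a_{i k(i,j)},\, a_{j k(i,j)}\}$, no off-diagonal of $b_{ij} b_{ij}^T$ exceeds the target in $A$. To actually attain the value $a_{pq}$ at the assigned internal pair, the $\mathbf{1}$-position of $b_{ij}$ must lie in $\{p, q\}$, so that the contributing product becomes $\mathbf{1} \otimes c = c$; one then sets $c = a_{pq}$, which automatically respects the cap. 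Finally, the AG-property (Proposition \ref{AG_property_and_sum_of_squares_and_squares_of_sums}) collapses each diagonal sum of squares down to $\mathbf{1}$ after taking $\bigoplus$, so summing all $\lfloor n^2/4 \rfloor$ rank-1 contributions reproduces $A = \bigoplus_{(i,j) \in V_1 \times V_2} b_{ij} b_{ij}^T$, yielding the desired $\lfloor n^2/4 \rfloor$-support-3 rank-1 CP-representation and hence the bound $\mathrm{CP\text{-}rank}(A) \le \max\{n, \lfloor n^2/4 \rfloor\}$.
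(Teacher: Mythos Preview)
Your reduction to unit diagonal via Theorem \ref{thmequivlantincline} is fine, and the Tur\'an-bipartite idea is natural, but the argument has a genuine gap at the ``no overshoot'' step. You assert that setting $c=a_{pq}$ ``automatically respects the cap'' $c\le\min\{a_{ik},a_{jk}\}$. It does not. If $b_{ij}$ carries $\mathbf{1}$ at $i$, $a_{ij}$ at $j$, and $c$ at $k=k(i,j)$, and you wish to cover the internal pair $(p,q)=(i,k)$, then $c=a_{ik}$ certainly gives $c\le a_{ik}$, but the cap also demands $c\le a_{jk}$, i.e.\ $a_{ik}\le a_{jk}$, and nothing forces that. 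Equivalently, the cross term $a_{ij}\otimes a_{ik}$ lands at position $(j,k)$ and must not exceed $a_{jk}$. Concretely, over the $[0,1]$ max--min incline take
\[
A=\begin{pmatrix}1&0.9&0.9&0.1\\0.9&1&0.1&0.9\\0.9&0.1&1&0.9\\0.1&0.9&0.9&1\end{pmatrix},
\qquad V_1=\{1,2\},\ V_2=\{3,4\}.
\]
To cover the internal pair $(1,2)$ (value $0.9$) by $b_{13}$ one gets a $(2,3)$-contribution $\min(0.9,0.9)=0.9>0.1=a_{23}$; by $b_{24}$ one gets a $(1,4)$-contribution $0.9>0.1=a_{14}$. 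Only $b_{14}$ or $b_{23}$ can cover $(1,2)$ without overshoot, and by the symmetry of $A$ the same two are the only ones that can cover $(3,4)$. Whichever disjoint assignment you make, the remaining two vectors $b_{13},b_{24}$ can place $\mathbf{1}$ only at one of the two still-uncovered diagonal positions, so goal~(i) fails as well. Thus neither the cap nor the diagonal coverage follows from counting alone; both depend on the actual entries of $A$.

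There is a related symptom: the only place you invoke the total order is for $x\otimes y\le\min\{x,y\}$, but that inequality holds in \emph{every} incline (it is exactly axiom~(4)), so your argument never really uses comparability. The paper's proof, by contrast, uses the total order in an essential way: before writing down any vectors one permutes so that a designated off-diagonal entry ($a_{13}$) is the global minimum and another ($a_{12}$) is the row-maximum, which is precisely what controls the spurious cross terms; the general case is then handled by induction from $n$ to $n+2$. Your bipartite scheme may well be salvageable, but it needs an explicit rule---depending on the ordering of the entries---for choosing the bipartition, the third vertices $k(i,j)$, and the $\mathbf{1}$-positions, together with a proof that all three goals are simultaneously met. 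As written, that rule and that proof are missing.
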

\begin{proof} Let us suppose that  $A$ is a completely positive matrix over  a totally ordered normal  incline $L$. Because of theorem \ref{thmequivlantincline}, we need only to consider symmetric matrices over $L$ all of whose diagonal entries are $\bf{1}$.  If $n \leq 3$ then $max\{n,[n^{2}/4]\} = n$ and by Lemma \ref{inclines_cp_2_3}, the CP-rank$(A) \leq n$. Now we will prove the theorem for $n \geq 4$. First, we will show that the result is true for $n = 4$ and $5$, then we will use induction going from index $n$ to index $n+2$.

For $n = 4$,  let us consider \begin{center}
$A = \begin{bmatrix}
\bf{1} & a_{12}& a_{13}&a_{14}\\
a_{12}& \bf{1} & a_{23}&a_{24}\\
a_{13} & a_{23} & \bf{1} &a_{34}\\
a_{14} & a_{24} & a_{34}& \bf{1} \\
\end{bmatrix}$
\end{center}
 Without loss of generality, we assume that $a_{13}$ is the smallest non-diagonal entry of the entire matrix and $a_{12}$ is the largest non-diagonal entry in the first row. Then the rank 1 CP-representation of $A$ is
\begin{center}
$ A = \begin{bmatrix}
\bf{1}\\a_{12}\\a_{13}\\\textbf{0}
\end{bmatrix}
\begin{bmatrix}
\bf{1}& a_{12}&a_{13}&\textbf{0}
\end{bmatrix}  \bigoplus \begin{bmatrix}
\textbf{0}\\\bf{1}\\a_{23}\\\textbf{0}
\end{bmatrix}   \begin{bmatrix}
\textbf{0}& \bf{1} &a_{23}&\textbf{0}\\
\end{bmatrix}$\end{center}  \begin{flushright}$
  \bigoplus \begin{bmatrix}
\textbf{0}\\\textbf{0}\\\bf{1}\\a_{34}
\end{bmatrix}
\begin{bmatrix}
\textbf{0}&\textbf{0}& \bf{1}&a_{34}\\
\end{bmatrix}
  \bigoplus \begin{bmatrix}
a_{14}\\a_{24}\\\textbf{0}\\\bf{1}
\end{bmatrix}
\begin{bmatrix}
a_{14}& a_{24}&\textbf{0}&\bf{1}\\
\end{bmatrix}$
\end{flushright}

\[
= \begin{bmatrix}
\bf{1}& a_{12}& a_{13}& \textbf{0}\\
a_{12}& a_{12}^{\otimes 2}& a_{12}\otimes a_{13}& \textbf{0}\\
 a_{13}& a_{12}\otimes a_{13}& a_{13}^{\otimes 2}& \textbf{0}\\
\textbf{0}& \textbf{0}& \textbf{0}& \textbf{0}\\
\end{bmatrix} \bigoplus \begin{bmatrix}
\textbf{0}& \textbf{0}& \textbf{0}& \textbf{0}\\
\textbf{0}& \bf{1}& a_{23}&\textbf{0}\\
\textbf{0}& a_{23}& a_{23}^{\otimes 2}& \textbf{0}\\
\textbf{0}& \textbf{0}& \textbf{0}& \textbf{0}\\
\end{bmatrix}
\]
\begin{flushright}
$\bigoplus \begin{bmatrix}
\textbf{0}& \textbf{0}& \textbf{0}& \textbf{0}\\
\textbf{0}& \textbf{0}& \textbf{0}& \textbf{0}\\
\textbf{0}& \textbf{0}& \bf{1}& a_{34}\\
\textbf{0}& \textbf{0}& a_{34}& a_{34}^{\otimes 2}\\
\end{bmatrix} \bigoplus \begin{bmatrix}
a_{14}& a_{14}\otimes a_{24} & \textbf{0}& a_{14}\\
a_{14}\otimes a_{24}& a_{24}^{\otimes 2}& \textbf{0}& a_{24}\\
\textbf{0}& \textbf{0}& \textbf{0}& \textbf{0}\\
a_{14}& a_{24}& \textbf{0}& \bf{1}\\
\end{bmatrix} $.
\end{flushright}
  Since $a_{12}\otimes a_{13} \leq a_{13} \leq a_{23}$ and $a_{14}\otimes a_{24}  \leq a_{14} \leq a_{12}$, so the addition of $a_{12}\otimes a_{13}$ and $a_{14}\otimes a_{24}$ will not affect the $(2,3)^{th}$ and $(1,2)^{th}$ entry of $A$  respectively. Hence CP-rank$(A) \leq 4 = [4^{2}/4]$  with support three.

Now for $n = 5$, let us consider \begin{center}
$A = \begin{bmatrix}
 \bf{1}& a_{12}& a_{13}&a_{14}&a_{15}\\
a_{12}& \bf{1}& a_{23}&a_{24}&a_{25}\\
a_{13} & a_{23} &  \bf{1}&a_{34}&a_{35}\\
a_{14} & a_{24} & a_{34}& \bf{1}&a_{45}\\
a_{15} & a_{25} & a_{35}&a_{45}& \bf{1}\\
\end{bmatrix}$
\end{center}
Without loss of generality, let us suppose that  $a_{13}$ is the smallest non-diagonal entry of the entire matrix and $a_{12}$ be the largest  non-diagonal entry in the first row. We need the following rank one completely positive   matrices for the rank 1 CP-representation of $A$.
\begin{itemize}
\item $B_{1} = \begin{bmatrix}
 \bf{1}\\a_{12}\\a_{13}\\\textbf{0}\\\textbf{0}\\
\end{bmatrix}\begin{bmatrix}
 \bf{1}&a_{12}&a_{13}&\textbf{0}&\textbf{0}
\end{bmatrix} = \begin{bmatrix}
 \bf{1}&a_{12}& a_{13}&\textbf{0}&\textbf{0}\\
a_{12}&a_{12}^{\otimes 2}&a_{12}\otimes a_{13}&\textbf{0}&\textbf{0}\\
a_{13}&a_{12}\otimes a_{13}&a_{13}^{\otimes 2}&\textbf{0}&\textbf{0}\\
 \textbf{0}&\textbf{0}&\textbf{0}&\textbf{0}&\textbf{0} \\
  \textbf{0}&\textbf{0}&\textbf{0}&\textbf{0}&\textbf{0} \\
  \end{bmatrix} $
\item $B_{2} = \begin{bmatrix}
\textbf{0}\\\bf{1}\\a_{23}\\\textbf{0}\\\textbf{0}\\
\end{bmatrix}\begin{bmatrix}
\textbf{0}&\bf{1}&a_{23}&\textbf{0}&\textbf{0}
\end{bmatrix}  = \begin{bmatrix}
\textbf{0}&\textbf{0}& \textbf{0}&\textbf{0}&\textbf{0}\\
\textbf{0}&\bf{1}&a_{23}&\textbf{0}&\textbf{0}\\
 \textbf{0}&a_{23}&a_{23}^{\otimes 2}&\textbf{0}&\textbf{0}\\
 \textbf{0}&\textbf{0}&\textbf{0}&\textbf{0}&\textbf{0} \\
  \textbf{0}&\textbf{0}&\textbf{0}&\textbf{0}&\textbf{0} \\
  \end{bmatrix} $
\item $B_{3} = \begin{bmatrix}
a_{14}\\a_{24}\\\textbf{0}\\\bf{1}\\\textbf{0}\\
\end{bmatrix}\begin{bmatrix}
a_{14}&a_{24}&\textbf{0}&\bf{1}&\textbf{0}
\end{bmatrix}  = \begin{bmatrix}
a_{14}^{\otimes 2}& a_{14}\otimes a_{24} &\textbf{0}&a_{14}&\textbf{0}\\
 a_{14}\otimes a_{24} &a_{24}^{\otimes 2}&\textbf{0}&a_{24}&\textbf{0}\\
\textbf{0}&\textbf{0}&\textbf{0}&\textbf{0}&\textbf{0}\\
a_{14}& a_{24}&\textbf{0}&\bf{1}&\textbf{0} \\
  \textbf{0}&\textbf{0}&\textbf{0}&\textbf{0}&\textbf{0} \\
  \end{bmatrix} $
\item $B_{4} = \begin{bmatrix}
a_{15}\\a_{25}\\\textbf{0}\\\textbf{0}\\\bf{1}\\
\end{bmatrix}\begin{bmatrix}
a_{15}&a_{25}&\textbf{0}&\textbf{0}&\bf{1}
\end{bmatrix}  = \begin{bmatrix}
a_{15}^{\otimes 2}& a_{15}\otimes a_{25} &\textbf{0}&\textbf{0}&a_{15}\\
 a_{15}\otimes a_{25} &a_{25}^{\otimes 2}&\textbf{0}&\textbf{0}&a_{25}\\
\textbf{0}&\textbf{0}&\textbf{0}&\textbf{0}&\textbf{0}\\
 \textbf{0}&\textbf{0}&\textbf{0}&\textbf{0}&\textbf{0} \\
  a_{15}&a_{25}&\textbf{0}&\textbf{0}&\bf{1} \\
  \end{bmatrix} $
\end{itemize}

Clearly $A = B_{1} \bigoplus B_{2} \bigoplus B_{3} \bigoplus B_{4} \bigoplus C$, where
\begin{center}
$C =  \begin{bmatrix}
\textbf{0}& \textbf{0}& \textbf{0}&\textbf{0}&\textbf{0}\\
\textbf{0}& \textbf{0}& \textbf{0}&\textbf{0}&\textbf{0}\\
\textbf{0} & \textbf{0} &\bf{1}&a_{34}&a_{35}\\
\textbf{0} & \textbf{0} & a_{34}&*&a_{45}\\
\textbf{0}& \textbf{0}& a_{35}&a_{45}&**\\
\end{bmatrix}$.
\end{center}

Here $*$ and $**$ can be any element  of the incline $L$. Now choose the smallest of $a_{34}, a_{35},$ and $a_{45}$.  We have the following cases:

Case 1: If $a_{34}$ or $a_{35}$ is the smallest of all the entries $\{a_{34}, a_{35}, a_{45}\}$ then $C$ can be written as the sum of two rank one completely positive matrices $C_{1}$ and $C_{2}$, where
\begin{center}
$C_{1} = \begin{bmatrix}
\textbf{0}\\\textbf{0}\\\bf{1}\\a_{34}\\a_{35}\\
\end{bmatrix}\begin{bmatrix}
\textbf{0}&\textbf{0}&\bf{1}&a_{34}&a_{35}
\end{bmatrix} = \begin{bmatrix}
\textbf{0}&\textbf{0}&\textbf{0}&\textbf{0}&\textbf{0}\\
\textbf{0}&\textbf{0}&\textbf{0}&\textbf{0}&\textbf{0}\\
\textbf{0}&\textbf{0}&\bf{1}&a_{34}&a_{35}\\
\textbf{0}&\textbf{0}&a_{34}&a_{34}^{\otimes 2}&a_{34}\otimes a_{35}\\
\textbf{0}&\textbf{0}&a_{35}&a_{34}\otimes a_{35}&a_{35}^{\otimes 2}\\
\end{bmatrix}$
\end{center}
and
\begin{center}$C_{2} = \begin{bmatrix}
\textbf{0}\\\textbf{0}\\\textbf{0}\\a_{45}\\\bf{1}\\
\end{bmatrix}\begin{bmatrix}
\textbf{0}&\textbf{0}&\textbf{0}&a_{45}&\bf{1}
\end{bmatrix} = \begin{bmatrix}
\textbf{0}&\textbf{0}&\textbf{0}&\textbf{0}&\textbf{0}\\
\textbf{0}&\textbf{0}&\textbf{0}&\textbf{0}&\textbf{0}\\
\textbf{0}&\textbf{0}&\textbf{0}&\textbf{0}&\textbf{0}\\
\textbf{0}&\textbf{0}&\textbf{0}&a_{45}^{\otimes 2}&a_{45} \\
\textbf{0}&\textbf{0}&\textbf{0}&a_{45} &\bf{1}\\
\end{bmatrix}$
\end{center}
Note that either we have $a_{34}\otimes a_{35} \leq a_{34} \leq a_{45}$ or $a_{34}\otimes a_{35} \leq a_{35} \leq a_{45}$, so the addition of $a_{34}\otimes a_{35}$ will not affect the $(4,5)^{th}$ entry of $A$ when we will add $C_{1}$ and $C_{2}$ to the rank 1 CP-representation of $A$.

Case 2: If $a_{45}$ is the smallest of all the entries $\{a_{34}, a_{35}, a_{45}\}$ then $C$ can be written as the sum of two rank one completely positive  matrices $C_{1}$ and $C_{2}$, where
\begin{center}
$C_{1} = \begin{bmatrix}
\textbf{0}\\\textbf{0}\\\bf{1}\\a_{34}\\a_{45}\\
\end{bmatrix}\begin{bmatrix}
\textbf{0}&\textbf{0}&\bf{1}&a_{34}&a_{45}
\end{bmatrix}  = \begin{bmatrix}
\textbf{0}&\textbf{0}&\textbf{0}&\textbf{0}&\textbf{0}\\
\textbf{0}&\textbf{0}&\textbf{0}&\textbf{0}&\textbf{0}\\
\textbf{0}&\textbf{0}&\bf{1}&a_{34}&a_{45}\\
\textbf{0}&\textbf{0}&a_{34}&a_{34}^{\otimes 2}&a_{34}\otimes a_{45}\\
\textbf{0}&\textbf{0}&a_{45}&a_{34}\otimes a_{45}&a_{45}^{\otimes 2}\\
\end{bmatrix}$
\end{center} and
\begin{center} $C_{2} = \begin{bmatrix}
\textbf{0}\\\textbf{0}\\a_{35}\\a_{45}\\\bf{1}\\
\end{bmatrix}\begin{bmatrix}
\textbf{0}&\textbf{0}&a_{35}&a_{45}&&\bf{1}
\end{bmatrix}  = \begin{bmatrix}
\textbf{0}&\textbf{0}&\textbf{0}&\textbf{0}&\textbf{0}\\
\textbf{0}&\textbf{0}&\textbf{0}&\textbf{0}&\textbf{0}\\
\textbf{0}&\textbf{0}&a_{35}^{\otimes 2}&a_{35}\otimes a_{45}&a_{35}\\
\textbf{0}&\textbf{0}&a_{35}\otimes a_{45}&a_{45}^{\otimes 2}&a_{45}\\
\textbf{0}&\textbf{0}&a_{35}&a_{45}&\bf{1}\\
\end{bmatrix}$
\end{center}
Note that   we have   $a_{34}\otimes a_{45} \leq a_{45} \leq a_{34}$, so it will not effect the $(3,4)^{th}$   entry of $A$ and   $a_{45} \leq a_{35}$, so  it will not affect the $(3,5)^{th}$ entry of $A$  when we will add $C_{1}$ and $C_{2}$ to the rank 1 CP-representation of $A$.

In both cases $C$ can be written as the sum of two rank one completely positive  matrices. Thus the rank 1 CP-representation of $A$ is
\begin{center}
$A = B_{1}  \bigoplus B_{2} \bigoplus B_{3} \bigoplus B_{4} \bigoplus C_{1} \bigoplus C_{2}$
\end{center}
Hence the CP-rank$(A) \leq 6 = [5^{2}/4]$ with support three.

Thus the theorem is true for $n = 4,5$.  Further, we note that for any integer $n$, \begin{center} $\left[(n + 2)^{2}/4\right] = \left[n^{2}/4\right] + n + 1$. \end{center}

Now we are ready to prove the induction step. Let $A_{n+2}$ be a  symmetric matrices over $L$ all of whose diagonal entries are $\bf{1}$.  Without loss of generality,  let  us suppose that  $a_{13}$ is the smallest non-diagonal entry of the entire matrix and $a_{12}$ is the largest non-diagonal entry  in the first row.  Let $A_{n}$ be a submatrix of $A_{n+2}$ obtained by deleting $1^{st}$ and $2^{nd}$ row and $1^{st}$ and $2^{nd}$ column of $A_{n+2}$. Since $A_{n}$ is a principal submatrix of $A_{n+2}$, $A_{n}$ is also a   symmetric    matrix over $L$ all of whose diagonal entries are $\textbf{1}$. By the induction hypothesis, $A_{n}$ is the sum of at most $\left[n^{2}/4\right]$ rank  one completely positive matrices with support three.

Now for each $i$, where $i =  4,5,....n+2$, we introduce a rank one completely positive matrix \begin{center}$B_{i} = b_{i}b_{i}^{T} = \begin{bmatrix}
a_{1i}\\a_{2i}\\\textbf{0}\\.\\.\\\textbf{0}\\a_{ii} = \bf{1} \\\textbf{0}\\.\\\textbf{0}\\
\end{bmatrix} \begin{bmatrix}
a_{1i}&a_{2i}&\textbf{0}&.&.&\textbf{0}& a_{ii} = \bf{1} &\textbf{0}&.&\textbf{0}
\end{bmatrix}$ \end{center}

Note that here we get the $(1,2)^{th}$ entry  of $(B_{i}B_{i}^{T})$ is  $a_{1i}\otimes a_{2i}$, which is less than or equal to $a_{1i} \leq a_{12}$. Thus it will not affect the $(1,2)^{th}$ entry of $A_{n+2}$ when we will add the $b_{i}b_{i}^{T}$ term to the rank 1 CP representation of $A_{n+2}$. However this rank one completely positive matrix  fixes $a_{1i}$, $a_{2i}$ and $a_{ii} = \bf{1}$ in $A_{n+2}$. Hence we have at most $(n+2)-3$ rank one completely positive  matrices.

Finally, we need two  rank one completely positive matrices:
\begin{itemize}
\item $B_{1} = b_{1}b_{1}^{T} = \begin{bmatrix}
\bf{1}\\a_{12}\\a_{13}\\\textbf{0}\\.\\.\\\textbf{0}
\end{bmatrix} \begin{bmatrix}
\bf{1}&a_{12}&a_{13}&\textbf{0}&.&.&\textbf{0}
\end{bmatrix}$. \end{itemize}
Since $a_{13}$ is the smallest non-diagonal entry, it will not affect the $(2,3)^{th}$ entry of $A_{n+2}$ when we will add the $b_{1}b_{1}^{T}$ term to the rank 1 CP representation of $A_{n+2}$. However this rank one completely positive  matrix fixes $a_{11} = \bf{1}$, $a_{12}$ and $a_{13}$ in $A_{n+2}$.\begin{itemize}
\item  $B_{2} = b_{2}b_{2}^{T} = \begin{bmatrix}
\textbf{0}\\\bf{1}\\a_{23}\\\textbf{0}\\.\\.\\\textbf{0}
\end{bmatrix} \begin{bmatrix}
\textbf{0}&\bf{1}&a_{23}&\textbf{0}&.&.&\textbf{0}\\
\end{bmatrix}$. \end{itemize}
This rank one completely positive  matrix will fix $a_{22} = \bf{1}$ and $a_{23}$ in $A_{n+2}$.

Thus $A_{n+2}$ is the sum of at most
\begin{center}
$\left[n^{2}/4\right] + (n+2)-3 + 2$ = $[n^{2}/4] + n + 1$
\end{center}rank 1 CP-matrices with support 3.

Further we will show that $\left[n^{2}/4\right]$ can not be replaced by any smaller number. The Boolean semiring is a totally ordered normal incline and it has been shown  \cite[Remark~ 3.1]{PMTA} that  the upper bound of the CP-rank is achieved for Boolean matrices. Hence $\left[n^{2}/4\right]$ can not be replaced by any smaller number.
\end{proof}

Since the Boolean semiring and  max-min semirings are totally ordered normal inclines, our proof generalizes the results proved in \cite{PMTA} for completely positive matrices over max-min semirings.

\section{LU $\&$ UL Factorization of CP Matrices Over Normal Inclines}\label{LU_UL_factorization_inclines}

In this section, we generalize various results for completely positive matrices over reals to completely positive matrices over normal inclines.   These results give conditions which guarantee that a completely positive matrix over  a normal incline  has a square factorization $BB^{T}$ especially one where $B$ is a triangular matrix.

  We first  define $UL$ and $LU$ completely positive matrices over inclines.  Our definition is a natural generalization of real $UL$ and $LU$ completely positive matrices.

\begin{defn} $(\text{UL and LU-completely positive matrix})$  Let $L$ be a commutative incline. A matrix $A$ over   $L$  is called a \textit{UL-completely positive matrix} if there exists an upper triangular matrix $U$ over $L$ such that $A = UU^{T}$. A matrix $A$ over $L$ is called a \textit{LU-completely positive matrix} if there exists a lower triangular matrix $C$ over $L$ such that $A = CC^{T}$.
\end{defn}

\begin{exam} \label{maxep2} We note that every $2 \times 2$ completely positive matrix over  a normal  incline $L$  is both LU-completely positive and UL-completely positive.  This can be proved directly.  
  Let  \begin{center}
 $A = \begin{bmatrix}
a_{11}& a_{12}\\
a_{12}& a_{22}\\
\end{bmatrix}$
\end{center}
be a completely positive matrix over a normal incline $L$. Then by theorem \ref{thmequivlantincline},  every $2$ by $2$ principal submatrix of $A$ has  the positive determinant   greater than or equal to the negative determinant, i.e.,  $a_{11}\otimes a_{22} \geq a_{12}^{\otimes 2}$. This implies that     $\sqrt{a_{11}}\otimes \sqrt{a_{22}} \geq a_{12}$, since in a normal incline the square root function is increasing and multiplicative. Now using the  LI-property, we get that there exists $c \in L$ such that $c \sqrt{a_{11}}\otimes \sqrt{a_{22}}  = a_{12}$. Now the matrix $A$ can be written as:
\begin{center}
$A = \begin{bmatrix}
\sqrt{a_{11}} & c \sqrt{a_{11}} \\\textbf{0} & \sqrt{a_{22}}
\end{bmatrix}\begin{bmatrix}
\sqrt{a_{11}} & c \sqrt{a_{11}} \\\textbf{0} & \sqrt{a_{22}}
\end{bmatrix}^{T}$
\end{center}

and similarly,

\begin{center}
$A = \begin{bmatrix}
\sqrt{a_{11}} & \textbf{0} \\ c \sqrt{a_{22}} & \sqrt{a_{22}}
\end{bmatrix}\begin{bmatrix}
\sqrt{a_{11}} & \textbf{0} \\ c \sqrt{a_{22}} & \sqrt{a_{22}}
\end{bmatrix}^{T}$
\end{center} \end{exam}

Further, we will see that   $3 \times 3$ completely positive matrices over   normal inclines  are either UL-completely positive or LU-completely positive, but not necessarily both. However, for $n \geq 4$,   $n \times n$ completely positive matrices over normal  inclines may be neither UL-completely positive nor LU-completely positive.

We begin by proving   incline analogs of Markham theorems (theorem \ref{Mar1} and theorem \ref{Mar2}) relating almost principal minors with the triangular factorizations.

\begin{thm} \label{maxthm1} If $A$ is an $n \times n$ completely positive matrix over  a normal  incline $L$, $n \geq 3$, and all   its left  almost  principal $2 \times 2$ submatrices   have $det^{+} \geq det^{-}$, then $A$ is UL-completely positive.\end{thm}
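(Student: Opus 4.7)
The plan is to induct on $n$. For $n = 2$, Example~\ref{maxep2} gives the result directly and the $2 \times 2$ left almost principal condition is vacuous. For $n \geq 3$, the leading $(n-1) \times (n-1)$ principal submatrix $A' := A[\{1,\dots,n-1\}|\{1,\dots,n-1\}]$ is completely positive (principal submatrices of CP matrices inherit complete positivity via the factor $B$) and inherits the $2 \times 2$ left almost principal minor condition. By induction, there is an upper triangular $U' \in M_{n-1}(L)$ with $A' = U'(U')^T$.

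The plan is to extend $U'$ to
\[
U = \begin{bmatrix} U' & u \\ \mathbf{0} & u_{nn} \end{bmatrix}
\]
with $UU^T = A$. Set $u_{nn} := \sqrt{a_{nn}}$, valid by the unique square root property. For $i < n$, the $2 \times 2$ principal condition $a_{in}^{\otimes 2} \leq a_{ii}\otimes a_{nn}$ combined with monotonicity of $\sqrt{\cdot}$ on normal inclines gives $a_{in} \leq \sqrt{a_{ii}\otimes a_{nn}}$; applying the LI-property, there is $t_i \in L$ with $t_i\otimes\sqrt{a_{ii}\otimes a_{nn}} = a_{in}$, and I set $u_i := \sqrt{a_{ii}}\otimes t_i$. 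Multiplicativity of $\sqrt{\cdot}$ then yields $u_i\otimes u_{nn} = a_{in}$ and $u_{nn}^{\otimes 2} = a_{nn}$, so the last row and column of $UU^T$ match those of $A$.

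The remaining entries require $(U'(U')^T)_{ij} \oplus u_i\otimes u_j = a_{ij}$ for $i,j < n$. Since $(U'(U')^T)_{ij} = a_{ij}$ by construction, this reduces to the entrywise inequality $u_i \otimes u_j \leq a_{ij}$, and this is precisely where the left almost principal minor hypothesis $a_{in}\otimes a_{jn} \leq a_{ij}\otimes a_{nn}$ must enter. With the chosen form $u_i = \sqrt{a_{ii}}\otimes t_i$, one computes directly $u_i\otimes u_j\otimes a_{nn} = a_{in}\otimes a_{jn}$, so the hypothesis translates into
\[
(u_i\otimes u_j)\otimes a_{nn} \leq a_{ij}\otimes a_{nn}.
\]

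The main obstacle is passing from this inequality to the sought-after $u_i\otimes u_j \leq a_{ij}$: since a normal incline need not be multiplicatively cancellative, the factor of $a_{nn}$ cannot simply be cancelled. I expect this step will be secured by combining the LI-property (which provides a ``divisor'' of $a_{nn}$), the AG-property, and the unique square root property, exploiting the bound $u_i \leq \sqrt{a_{ii}} \leq \mathbf{1}$ that is built into the construction $u_i = \sqrt{a_{ii}}\otimes t_i$. Once this inequality is in hand, $U$ is upper triangular with $UU^T = A$, completing the induction and establishing that $A$ is UL-completely positive.
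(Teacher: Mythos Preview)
There is a genuine gap exactly where you identify it. From $(u_i\otimes u_j)\otimes a_{nn}\le a_{ij}\otimes a_{nn}$ you cannot deduce $u_i\otimes u_j\le a_{ij}$ in a general normal incline: multiplication need not be cancellative (already in the max--min incline $\min(x,c)\le\min(y,c)$ does not force $x\le y$), and none of the LI-, AG-, or square-root properties supplies a substitute for cancellation. The combination you propose---LI to produce a divisor of $a_{nn}$, AG, and the bound $u_i\le\sqrt{a_{ii}}$---does not close the gap; these tools let you factor and take roots, not strip a common multiplicand from an inequality.

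The paper's proof sidesteps this entirely by a preliminary reduction you do not make. Invoking Theorem~\ref{thmequivlantincline}, it passes to the case where every diagonal entry of $A$ equals $\mathbf{1}$ (writing $A=DMD$ with $D$ diagonal and $M$ having $\mathbf{1}$'s on the diagonal, so it suffices to factor $M$). With all $a_{kk}=\mathbf{1}$, the upper-triangular factor is written down directly, with no induction: set $u_{ii}=\mathbf{1}$, $u_{ij}=a_{ij}$ for $i<j$, and $u_{ij}=\mathbf{0}$ for $i>j$. Then $(UU^T)_{ii}=\mathbf{1}$ because $\mathbf{1}$ is maximal, and for $i<j$ one has
\[
(UU^T)_{ij}=a_{ij}\oplus\bigoplus_{k>j}a_{ik}\otimes a_{jk}=a_{ij},
\]
since the $2\times 2$ left almost principal hypothesis, once $a_{kk}=\mathbf{1}$, reads precisely $a_{ij}\ge a_{ik}\otimes a_{jk}$ for each $k>j$. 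No cancellation is ever needed.

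If you graft this same diagonal-$\mathbf{1}$ reduction onto the front of your induction, then $a_{nn}=\mathbf{1}$ and your obstruction evaporates; in fact your last column then collapses to $u_i=a_{in}$, which is the paper's explicit factor read off one column at a time. The missing idea in your argument is thus exactly this normalization step.
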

\begin{proof}  Let $A$ be an $n \times n$ completely positive matrix over  a normal incline $L$. Because of theorem \ref{thmequivlantincline}, we assume that $A$ is a symmetric matrix over $L$ whose  diagonal entries are  equal to $\textbf{1}$. We will prove this theorem by constructing an upper  triangular matrix $U$ such that $A = UU^{T}$. Let
\begin{center}
$u_{ij} = \left\{
\begin{array}{l l}
a_{ij}   \quad \quad \qquad \qquad \text{ if  } i < j\\

\textbf{1} \quad \quad \quad \quad  \quad \quad \text{ if } i = j\\

\textbf{0} \quad   \qquad \quad \quad \quad\text{otherwise}.\\
\end{array} \right.$
\end{center}
Then we have
\begin{center}
$(UU^{T})_{ii} = \bigoplus\limits_{k=1}^{n} (u_{ik} \otimes u_{ik})$.
\end{center}
 It is clear from the construction of $U$ that $u_{ik}$ is nonzero ($\neq \textbf{0}$) only if $k \geq i$. We  split the above  sum into two parts. In the first part the sum is over all $k > i$ and   the second part consists of the single term where $k = i$. Clearly the second part contains a single entry of the sum. Thus we have
\begin{align*}
 (UU^{T})_{ii} &= \bigoplus\limits_{k > i} \quad  (u_{ik} \otimes u_{ik})\quad  \oplus \quad (u_{ii} \otimes u_{ii})\\
&=  \mathop{l.u.b}\limits_{k > i}^{} \quad (a_{ik} \otimes a_{ik})  \quad \oplus \quad (\textbf{1} \otimes \textbf{1})\\
&= \textbf{1}, \qquad (\text{since $\textbf{1}$ is the greatest element of the incline}).
 \end{align*}

Now for $i \neq j$,  \begin{center}
$(UU^{T})_{ij} = \bigoplus\limits_{k=1}^{n} (u_{ik} \otimes u_{jk})$.
\end{center}
 Without loss of generality, we assume that $i < j$. It is clear from the construction of $U$ that $u_{ik}$ is nonzero only if $k \geq i$ and $u_{jk}$ is nonzero only if $k \geq   j$. This implies that  $(u_{ik} \otimes u_{jk})$ is nonzero only if $k \geq j$. Now we split the above sum into two parts. In the first part the sum is over all $k > j$ and  the second part consists of the single term where $k = j$. Clearly the second part contains a single entry of the sum. Thus we have,
\begin{align*}
 (UU^{T})_{ij} &=  \bigoplus\limits_{k > j} (u_{ik} \otimes u_{jk})\quad  \oplus \quad (u_{ij} \otimes u_{jj})\\
&=  \mathop{l.u.b}\limits_{k > j}^{}    (a_{ik} \otimes a_{jk}) \quad   \oplus  \quad  (a_{ij} \otimes \textbf{1})\\
&=  \mathop{l.u.b}\limits_{k > j}^{}  (a_{ik} \otimes a_{jk}) \quad   \oplus  \quad   a_{ij}
\end{align*}
Since all the left  almost  principal $2 \times 2$ submatrices of $A$  have $det^{+} \geq det^{-}$, we have for all $k$, where $k > j > i$, $det^{+}(A[i,k|j,k]) \geq det^{-}(A[i,k|j,k])$, i.e., $a_{ij} \otimes a_{kk} \geq a_{ik} \otimes a_{jk}$. This implies that $ a_{ij} \otimes \textbf{1}   = a_{ij} \geq a_{ik} \otimes a_{jk}$ and thus the
\begin{center}$ (UU^{T})_{ij}=\mathop{l.u.b}\limits_{k > j}^{}   (a_{ik} \otimes a_{jk}) \oplus   a_{ij}  = a_{ij}$.\end{center}
This proves that $A = UU^{T}$. Hence $A$ is an UL-completely positive matrix.
\end{proof}

Analogous results hold for LU-completely positive matrices. By a similar argument to the previous theorem, we have:

\begin{thm} \label{maxthm2} If $A$ is an $n \times n$ completely positive matrix over  a normal  incline $L$ and all its right  almost  principal $2 \times 2$ submatrices  have $det^{+} \geq det^{-}$, then $A$ is LU-completely positive.\end{thm}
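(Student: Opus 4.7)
The plan is to mirror the construction used for Theorem \ref{maxthm1}, swapping the roles of upper and lower triangular and of left and right almost principal minors. By Theorem \ref{thmequivlantincline} we may assume (after factoring out the diagonal $D=\operatorname{diag}(\sqrt{a_{ii}})$ and absorbing it into the triangular factor) that $A$ is symmetric with all diagonal entries equal to $\textbf{1}$; note that if $M=CC^T$ with $C$ lower triangular, then $DMD=(DC)(DC)^T$ and $DC$ is again lower triangular, so this reduction is harmless.

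I then define a lower triangular matrix $C$ by $c_{ij}=a_{ij}$ if $i>j$, $c_{ii}=\textbf{1}$, and $c_{ij}=\textbf{0}$ otherwise, and verify $A=CC^T$ entry by entry. For the diagonal entries, $(CC^T)_{ii}=\bigoplus_{k\le i} c_{ik}^{\otimes 2}$, and the term $k=i$ contributes $\textbf{1}\otimes\textbf{1}=\textbf{1}$, which absorbs every other term because $\textbf{1}$ is the greatest element of $L$.

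For the off-diagonal entries, by symmetry it suffices to treat $i<j$. From the construction, $c_{ik}\otimes c_{jk}$ is nonzero only if $k\le i$, so
\[
(CC^T)_{ij}=\bigoplus_{k<i}(a_{ik}\otimes a_{jk})\ \oplus\ (c_{ii}\otimes c_{ji})=\mathop{l.u.b}\limits_{k<i}\,(a_{ik}\otimes a_{jk})\ \oplus\ a_{ij}.
\]
The key step is showing that each term $a_{ik}\otimes a_{jk}$ with $k<i<j$ is dominated by $a_{ij}$. For this I invoke the right almost principal $2\times 2$ submatrix $A[\{k,i\}\,|\,\{k,j\}]$, whose rows and columns share the initial index $k$ and differ only in the last index ($i\ne j$). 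Its positive determinant is $a_{kk}\otimes a_{ij}=\textbf{1}\otimes a_{ij}=a_{ij}$ and its negative determinant is $a_{kj}\otimes a_{ik}=a_{ik}\otimes a_{jk}$, so the hypothesis $\det^{+}\ge\det^{-}$ gives precisely $a_{ij}\ge a_{ik}\otimes a_{jk}$. Hence $(CC^T)_{ij}=a_{ij}$, and $A=CC^T$ with $C$ lower triangular, proving $A$ is LU-completely positive.

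The only subtle point is verifying that the right almost principal minor conditions are preserved when reducing to the $\textbf{1}$-diagonal case via $A=DMD$; this follows because both $\det^{+}$ and $\det^{-}$ of a $2\times 2$ almost principal submatrix pick up the same factor $\sqrt{a_{ii}}\otimes a_{kk}\otimes\sqrt{a_{jj}}$ from conjugation by $D$, so the inequality transfers. Beyond this small bookkeeping step, the argument is a straightforward transpose of the UL case and no new ideas are required.
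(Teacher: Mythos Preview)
Your proof is correct and follows exactly the approach the paper intends — its own proof of this theorem is the single sentence ``By a similar argument to the previous theorem,'' and your construction of the lower triangular $C$ with $c_{ij}=a_{ij}$ for $i>j$, $c_{ii}=\textbf{1}$ is precisely the transpose of the $U$ built in Theorem~\ref{maxthm1}, with the right almost principal $2\times 2$ submatrix $A[\{k,i\}\,|\,\{k,j\}]$ playing the role that the left almost principal one $A[i,k\,|\,j,k]$ played there.

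One caveat worth flagging: your justification that the minor inequality ``transfers'' through the $A=DMD$ reduction, on the grounds that $\det^{+}$ and $\det^{-}$ acquire the same multiplicative factor, tacitly invokes a cancellation law ($c\otimes x\geq c\otimes y\Rightarrow x\geq y$) that a general incline need not satisfy. The paper makes the identical reduction in its proof of Theorem~\ref{maxthm1} without addressing this point, so you are not diverging from the paper here, but the step is more delicate than either treatment suggests.
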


The converses of theorem \ref{maxthm1} and theorem \ref{maxthm2} are not true. We give a counterexample of a UL-completely positive matrix over  a  normal incline $L$    which has a left almost principal $2 \times 2$ submatrix  that does not satisfy the inequality $det^{+} \geq det^{-}$. Since any max-min semiring is a normal incline, we use matrices over a max-min semiring $[0,1]$ in the following example.
\begin{exam}
Let
\begin{center}
 $A = \begin{bmatrix}
1 & 0.25 & 0.5 & 0\\
0.25 & 0.75 & 0.5 & 0\\
0.5 &0.5 & 0.75 & 0 \\
0 & 0 & 0 & 0 \\
\end{bmatrix}$
\end{center}
be a  $4 \times 4$ matrix over the $[0,1]$ max-min semiring. The matrix $A$ is UL-completely positive because  there exists an upper triangular matrix $U$ such that $A = UU^{T}$, where
\begin{center}
$U = \begin{bmatrix}
0 & 1 & 0 & 0.5 \\
0 & 0.25 & 0.75 &0 \\
0 & 0 & 0.5 & 0.75\\
0 & 0 & 0 & 0 \\
\end{bmatrix}$
\end{center}
However, the left almost principal $2 \times 2$ submatrix $A[2,3|1,3]$ of  $A$ has $det^{+} = 0.25 \leq 0.5 = det^{-}$.
\end{exam}
Theorems  \ref{maxthm1} and \ref{maxthm2} are   incline  generalizations of theorems \ref{Mar1} and \ref{Mar2}. We note that in the incline case, we need only $2 \times 2$  left or right  almost  principal  submatrices, i.e.,  if all $2 \times 2$ left or right  almost  principal  submatrices of an $n \times n$ completely positive matrix have  $det^{+} \geq det^{-}$ then the matrix is UL or LU completely positive respectively.

\begin{remark} The CP-rank of an $n \times n$ completely positive matrix over a  normal incline $L$  is less than or equal to $n$ if either all its left  almost  principal $2 \times 2$ submatrices  or all of its right almost  principal $2 \times 2$ submatrices have $det^{+} \geq det^{-}$. \end{remark}

 Now we will prove that all  $3 \times 3$ completely positive matrices over totally ordered normal inclines  are either LU-completely positive matrices or UL-completely positive matrices (or both). We need the following lemma:

\begin{lemma} \label{maxlemma1} Let $L$ be a totally ordered normal incline. If $A$ is a $3 \times 3$ completely positive matrix over   $L$, then at least two of the following inequalities holds:
\begin{center}
$a_{11} \otimes a_{23} \geq a_{12} \otimes a_{13}$\\
$a_{22} \otimes a_{13} \geq a_{12} \otimes a_{23}$\\
$a_{33} \otimes a_{12} \geq a_{13} \otimes a_{23}$
\end{center}
\end{lemma}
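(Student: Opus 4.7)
The plan is to apply the factorization $A = DMD$ from Theorem \ref{thmequivlantincline}(4), which writes each off-diagonal entry as $a_{ij} = \sqrt{a_{ii}} \otimes m_{ij} \otimes \sqrt{a_{jj}}$ (for $i \neq j$) with $m_{ii} = \textbf{1}$ and, crucially, $m_{ij} \leq \textbf{1}$. Substituting these expressions into the three inequalities, each one becomes a common multiplicative factor times either a single $m_{ij}$ on the left or a product of two $m_{ij}$'s on the right; for instance, inequality $(A)$ becomes
\[
a_{11} \otimes \sqrt{a_{22}a_{33}} \otimes m_{23} \;\geq\; a_{11} \otimes \sqrt{a_{22}a_{33}} \otimes m_{12} \otimes m_{13},
\]
and the analogous reformulations hold for $(B)$ and $(C)$. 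Since $L$ is totally ordered, the contrapositive of monotonicity of $\otimes$ yields that whenever $(A)$ fails, $m_{23} < m_{12} \otimes m_{13}$, and similarly for $(B)$ and $(C)$.

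I then argue that no two of these three reduced inequalities can fail simultaneously. Suppose, for contradiction, that $(A)$ and $(B)$ both fail, so $m_{23} < m_{12} \otimes m_{13}$ and $m_{13} < m_{12} \otimes m_{23}$. Using monotonicity of $\otimes$ together with $m_{12}^{\otimes 2} \leq \textbf{1}$ (since $\textbf{1}$ is the greatest element of $L$), the first inequality gives
\[
m_{12} \otimes m_{23} \;\leq\; m_{12}^{\otimes 2} \otimes m_{13} \;\leq\; \textbf{1} \otimes m_{13} \;=\; m_{13},
\]
which directly contradicts the second. Permuting the roles of the three indices rules out the pairs $\{(A),(C)\}$ and $\{(B),(C)\}$ by the identical scheme, yielding the conclusion that at least two of $(A)$, $(B)$, $(C)$ hold.

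The main obstacle to a more direct proof is that the obvious route---multiplying the two failing inequalities and cancelling common factors to contradict the $2 \times 2$ condition $a_{ii} \otimes a_{jj} \geq a_{ij}^{\otimes 2}$ of Theorem \ref{thmequivlantincline}(3)---is unavailable, because totally ordered normal inclines need not be cancellative (for instance, max-min is not). The reduction to $M$, combined with the universal bound $m_{ij} \leq \textbf{1}$, supplies the substitute for cancellation: an extra factor of $m_{12}$ can be absorbed into $\textbf{1}$ rather than cancelled, which is precisely what closes the chain of inequalities into a contradiction.
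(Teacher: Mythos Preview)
Your argument is correct. The contrapositive step is sound: if $c\otimes m_{23} < c\otimes (m_{12}\otimes m_{13})$ then, since $m_{23}\ge m_{12}\otimes m_{13}$ would force $c\otimes m_{23}\ge c\otimes(m_{12}\otimes m_{13})$ by monotonicity, total order gives $m_{23}<m_{12}\otimes m_{13}$. The closing chain $m_{12}\otimes m_{23}\le m_{12}^{\otimes 2}\otimes m_{13}\le m_{13}$ is valid in any incline (indeed $x\otimes y\le y$ already suffices for the last step), and this contradicts $m_{13}<m_{12}\otimes m_{23}$. The remaining two pairs follow by the obvious relabelling.

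Your route is genuinely different from the paper's. The paper argues directly: assuming, say, the first two inequalities fail, it multiplies them to obtain
\[
a_{11}\otimes a_{22}\otimes a_{13}\otimes a_{23} \;<\; a_{12}^{\otimes 2}\otimes a_{13}\otimes a_{23},
\]
and then invokes $a_{11}\otimes a_{22}\ge a_{12}^{\otimes 2}$ from Theorem~\ref{thmequivlantincline}(3) to reach a contradiction. You instead pass through the $DMD$ factorization of Theorem~\ref{thmequivlantincline}(4), strip the common diagonal factor via the contrapositive of monotonicity, and finish using only $m_{ij}\le \textbf{1}$. What your approach buys is exactly what you identify at the end: the paper's product step asserts a \emph{strict} inequality from two strict inequalities, but in a non-cancellative incline (e.g.\ max--min) $p<q$ and $r<s$ yield only $p\otimes r\le q\otimes s$ in general, so as written that step needs further justification. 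Your argument sidesteps this entirely, since the only place you need strictness is in the reduced inequalities among the $m_{ij}$, and there it is delivered cleanly by the contrapositive. The cost is a slightly longer setup via the $DMD$ decomposition; the benefit is that every inequality manipulation is transparently valid in any totally ordered normal incline.
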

\begin{proof} Suppose that two of the inequalities do not hold, say we have instead,
\begin{center}
$a_{11} \otimes a_{23} < a_{12} \otimes a_{13}$\\$ a_{22} \otimes a_{13} < a_{12} \otimes a_{23}$.
\end{center}
Then \begin{equation} \label{equatuon_lemma_maxlemma1}
 a_{11} \otimes a_{23} \otimes a_{22} \otimes a_{13} < a_{12} \otimes a_{13} \otimes a_{12} \otimes a_{23}.
\end{equation}
Given that   $A$ is a $3 \times 3$ completely positive matrix over   a normal incline $L$. Therefore, by theorem \ref{thmequivlantincline}, every $2 \times 2$ principal submatrix of $A$ has $det^{+} \geq det^{-}$, i.e.,  $a_{11} \otimes a_{22} \geq  a_{12}^{\otimes 2}$. Therefore,   $a_{11} \otimes a_{23} \otimes a_{22} \otimes a_{13} \geq a_{12} \otimes a_{13} \otimes a_{12} \otimes a_{23}$, where $a_{23}, a_{13} \in L$. Thus we get a contradiction to equation \eqref{equatuon_lemma_maxlemma1}.  All other cases can be proved using a similar argument.
\end{proof}

Now we will relate inequalities of the above lemma with  the positive and the negative determinant of $2 \times 2$ submatrices of the given matrix $A$. In the above lemma, the first inequality $a_{11} \otimes a_{23} \geq a_{12} \otimes a_{13}$ implies that  $det^{+}(A[1,2|1,3]) \geq det^{-}(A[1,2|1,3])$.
In  other words, this inequality implies that the right almost  principal $2 \times 2$ submatrix $A[1,2|1,3]$ of $A$ has the  positive determinant greater than or equal to the negative determinant.

The third inequality $a_{33} \otimes a_{12} \geq a_{13}\otimes a_{23}$ implies that  $det^{+}(A[1,3|2,3]) \geq det^{-}(A[1,3|2,3])$, or stated in words the left almost  principal $2 \times 2$ submatrix $A[1,3|2,3]$ of $A$ has the  positive determinant greater than or equal to the negative determinant. However the second  inequality $a_{22} \otimes a_{13} \geq a_{12} \otimes a_{23}$ has no relation with any left or right almost  principal $2 \times 2$ submatrix of $A$.

\begin{thm}\label{3_3_lu_or_Ul} Let $L$ be a totally ordered normal incline. If $A$ is a $3 \times 3$ completely positive matrix over   $L$, then $A$ is either LU-completely positive or UL-completely positive or both.\end{thm}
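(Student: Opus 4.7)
The plan is to combine Lemma \ref{maxlemma1} with the Markham-type Theorems \ref{maxthm1} and \ref{maxthm2}. By Theorem \ref{thmequivlantincline} we may reduce to the symmetric case with unit diagonal, although this is not strictly necessary. The key observation is the correspondence spelled out in the discussion preceding the theorem: inequality $(1)$ of Lemma \ref{maxlemma1}, namely $a_{11}\otimes a_{23} \geq a_{12}\otimes a_{13}$, is precisely the assertion $det^{+}(A[1,2|1,3]) \geq det^{-}(A[1,2|1,3])$ for the right almost principal submatrix $A[1,2|1,3]$, while inequality $(3)$, namely $a_{33}\otimes a_{12} \geq a_{13}\otimes a_{23}$, is precisely the assertion $det^{+}(A[1,3|2,3]) \geq det^{-}(A[1,3|2,3])$ for the left almost principal submatrix $A[1,3|2,3]$.

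Next I would enumerate the $2 \times 2$ almost principal submatrices of a $3 \times 3$ matrix. A direct check using the index conditions of the definitions shows that the left almost principal $2 \times 2$ submatrices are $A[1,3|2,3]$ and $A[2,3|1,3]$, which by the symmetry of $A$ are transposes of each other and hence have the same positive and negative determinants. Likewise, the two right almost principal $2 \times 2$ submatrices $A[1,2|1,3]$ and $A[1,3|1,2]$ are transposes of one another. Consequently, to invoke Theorem \ref{maxthm1} it is enough to verify $det^{+} \geq det^{-}$ for $A[1,3|2,3]$, and to invoke Theorem \ref{maxthm2} it is enough to verify the same for $A[1,2|1,3]$.

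Finally I would apply Lemma \ref{maxlemma1} to conclude that at least two of the three listed inequalities hold, and then run a short case analysis: any pair drawn from $\{(1),(2),(3)\}$ must contain at least one of $(1)$ and $(3)$, since the only pair that omits both is not a pair at all but the single inequality $(2)$. Hence at least one of $(1)$ or $(3)$ holds, so by Theorems \ref{maxthm2} and \ref{maxthm1} respectively, $A$ is LU-completely positive, UL-completely positive, or both. I do not expect a substantial obstacle here: the genuine work is already encapsulated in Lemma \ref{maxlemma1}, and the role of the present theorem is the dictionary between the coordinate inequalities of that lemma and the almost principal determinant conditions required by the Markham-type theorems.
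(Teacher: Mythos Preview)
Your proposal is correct and follows essentially the same route as the paper: enumerate the two left and two right almost principal $2\times 2$ submatrices, observe that each pair shares the same $det^{+}$ and $det^{-}$ (you phrase this via transposition under symmetry, the paper computes directly), and then use Lemma \ref{maxlemma1} to guarantee that at least one of inequalities $(1)$ or $(3)$ holds so that Theorem \ref{maxthm1} or \ref{maxthm2} applies. The only cosmetic difference is that the paper treats the degenerate case of a zero diagonal entry separately via Example \ref{maxep2}, whereas you absorb it into the $A=DMD$ reduction; both are fine.
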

\begin{proof} If any diagonal entry of $A$ is $\textbf{0}$, then all the entries in the corresponding row and column will be $\textbf{0}$. In this case the result follows from example \ref{maxep2}. Now suppose that $a_{ii} > \textbf{0}$ for all $i = 1,2,3$. The only left almost principal $2 \times 2$ submatrices of $A$ are
\begin{center}
$A_{1} = A[1,3|2,3]  \qquad \qquad A_{2} = A[2,3|1,3]$
\end{center}
and
\begin{center}
$det^{+}(A_{1}) = det^{+}(A_{2}) = a_{33} \otimes a_{12}$ \\
$det^{-}(A_{1}) = det^{-}(A_{2}) =  a_{13} \otimes a_{23}$
\end{center}
and the only right  almost principal $2 \times 2$ submatrices of $A$ are
\begin{center}
$A_{3} = A[1,3|1,2]  \qquad \qquad A_{4} = A[1,2|1,3]$
\end{center}
and
\begin{center}
$det^{+}(A_{3}) =  det^{+}(A_{4}) = a_{11} \otimes a_{23}$ \\
$det^{-}(A_{3}) = det^{-}(A_{4}) =  a_{13} \otimes a_{12}$
\end{center}
By lemma \ref{maxlemma1}, either the left almost principal $2 \times 2$ submatrices have $det^{+} \geq det^{-}$ or the right almost principal $2 \times 2$ submatrices have $det^{+} \geq det^{-}$ or both. Thus by theorem \ref{maxthm1} and \ref{maxthm2},   $A$ is either  UL-completely positive or LU-completely positive or both.
\end{proof}
\begin{exam} Let $L$ be a totally ordered normal incline.  A $3 \times 3$ matrix $A$ over   $L$   need not be both LU-completely positive and UL-completely positive. For example, consider the matrix $A$ over the $[0,1]$ max-min semiring, where \begin{center}
$A = \begin{bmatrix}
0.75& 0 & 0.25 \\ 0 & 0.5 & 0.5 \\0.25 & 0.5 &1
\end{bmatrix}$
\end{center} Every right almost principal $2 \times 2$ submatrix of $A$ has its positive determinant greater than or equal to their negative determinant. Thus by theorem \ref{maxthm2}, $A$ is LU-completely positive. However, we can check that there does not exist  any upper triangular matrix $U$ over the max-min semiring such that $A = UU^{T}$.\end{exam}

\begin{defn} $(TN_2 \text{ Matrix})$  A matrix over any commutative semiring is called a \textit{$TN_2$ matrix} if $det^+\ge det^-$ for all its two by two submatrices, i.e.,   for all $i, j, k, l$ with $i<k$ and $j<l$, we have  $a_{ij} \otimes a_{kl}\geq a_{il} \otimes a_{kj}$.
\end{defn}
 The class of $TN_2$ real matrices has interesting properties in its own right; the results in \cite{JN} are a good example of this. For $TN_2$  matrices over normal inclines, we have an analog of corollary \ref{marcor} for inclines.

\begin{cor} Every square symmetric $TN_2$ matrix over a  normal incline    is both LU- and UL- completely positive.
\end{cor}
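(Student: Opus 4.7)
The plan is to reduce the statement directly to the incline Markham theorems, Theorems \ref{maxthm1} and \ref{maxthm2}. The key observation is that the $TN_2$ hypothesis is precisely the statement that $det^+\geq det^-$ holds on every $2\times 2$ submatrix, which simultaneously supplies all three kinds of inequality needed: on $2\times 2$ principal submatrices (to invoke Theorem \ref{thmequivlantincline}), on $2\times 2$ left almost principal submatrices (to invoke Theorem \ref{maxthm1}), and on $2\times 2$ right almost principal submatrices (to invoke Theorem \ref{maxthm2}).

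First I would verify that $A$ is completely positive. For any $i<j$, the $TN_2$ inequality applied with row set $\{i,j\}$ and column set $\{i,j\}$ gives $a_{ii}\otimes a_{jj}\geq a_{ij}\otimes a_{ji}$; symmetry gives $a_{ij}=a_{ji}$, so condition (3) of Theorem \ref{thmequivlantincline} holds and $A$ is completely positive.

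Next, let $A[\{\alpha_1,\alpha_2\}|\{\beta_1,\alpha_2\}]$ be any $2\times 2$ left almost principal submatrix, with $\alpha_1<\alpha_2$, $\beta_1<\alpha_2$, and $\alpha_1\neq\beta_1$. Applying the $TN_2$ condition with $i=\alpha_1$, $k=\alpha_2$, $j=\beta_1$, $l=\alpha_2$ yields $a_{\alpha_1,\beta_1}\otimes a_{\alpha_2,\alpha_2}\geq a_{\alpha_1,\alpha_2}\otimes a_{\alpha_2,\beta_1}$, which is precisely $det^+\geq det^-$ for this submatrix. Theorem \ref{maxthm1} then delivers UL-complete positivity of $A$ whenever $n\geq 3$. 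An identical argument using the right almost principal $2\times 2$ submatrices together with Theorem \ref{maxthm2} yields LU-complete positivity.

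The remaining small cases $n=1$ and $n=2$ are handled directly: the $1\times 1$ case is trivial, and Example \ref{maxep2} explicitly exhibits both triangular factorizations for any $2\times 2$ completely positive matrix over a normal incline. I do not anticipate any genuine obstacle in this proof; the work is essentially the bookkeeping observation that the single symbolic inequality defining $TN_2$ verifies all three hypotheses at once.
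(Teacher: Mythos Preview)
Your proposal is correct and matches the paper's intended argument: the corollary is stated there without proof, as an immediate consequence of Theorems \ref{maxthm1} and \ref{maxthm2} together with Theorem \ref{thmequivlantincline}, and you have simply spelled out that deduction (including the handling of $n\le 2$ via Example \ref{maxep2}).
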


\end{document}